\journal{}
\newtheorem{theorem}{Theorem}
\newtheorem{corollary}[theorem]{Corollary}
\newtheorem{assumption}{Assumption}
\newtheorem{remark}{Remark}
\definecolor{yscol}{HTML}{6622AA}
\definecolor{hwcol}{rgb}{0, 0, 0.9}
\definecolor{mlcol}{rgb}{0, 0.7, 0}
\definecolor{todocol}{rgb}{0.0, 0.4, 0.0}
\def\XXint#1#2#3{{\setbox0=\hbox{$#1{#2#3}{\int}$ }
\vcenter{\hbox{$#2#3$ }}\kern-.6\wd0}}
\def\b{\big}
\def\B{\Big}
\def\bg{\bigg}
\def\setsep{\,|\,}
\def\bsetsep{\,\b|\,}
\def\R{\mathbb{R}}
\def\Z{\mathbb{Z}}
\def\C{\mathbb{C}}
\def\dx{\,{\rm d}x}
\def\dr{\,{\rm d}r}
\def\pp{\partial}
\def\<{\langle}
\def\>{\rangle}
\def\mA{\mathsf{A}}
\def\mF{{\sf F}}
\def\mJ{{\sf J}}
\def\mR{{\sf R}}
\def\tr{{\sf tr}}
\newcommand{\Dc}[1]{\D_{#1}}
\def\D{\nabla}
\def\del{\delta}
\def\ddel{\delta^2}
\def\a{{\rm a}}
\def\c{{\rm c}}
\def\a{{\rm a}}
\def\c{{\rm c}}
\def\ac{{\rm ac}}
\def\i{{\rm i}}
\def\L{\Lambda}
\def\Cs{\mathcal{C}}
\def\Us{\mathscr{U}}
\def\Ush{\dot{\Us}^{1,2}}
\def\Usc{\Us^c}
\def\E{\mathscr{E}}
\def\Ea{\E^\a}
\def\Eb{\E^{\rm b}}
\def\Ec{\E^\c}
\def\Eh{\E^\ac}
\def\F{\mathscr{F}}
\def\L{\Lambda}
\def\Li{\L^{\i}}
\def\Nhd{\mathcal{N}}
\def\Rg{\mathscr{R}}
\def\vsig{\varsigma}
\def\T{\mathcal{T}}
\def\Tp{T^+}
\def\Tm{T^-}
\def\np{\nu^+}
\def\nm{\nu^-}
\def\Ta{\T_\a}
\def\Th{\mathcal{T}_h}
\def\Fh{\mathscr{F}_h}
\def\UsT{\Us_h}
\def\Ia{I_\a}
\def\vor{\rm vor}
\def\s{\sigma}
\def\sh{\sigma^{\rm ac}}
\def\sa{\sigma^\a}
\def\sigc{\sigma^\c}
\def\PO{{\rm P}_0}
\def\Rg{\mathcal{R}}
\def\Lhom{\L^{\rm hom}}
\def\trc{{\rm tr}}
\def\mo{{\rm mo}}
\def\cg{{\rm cg}}
\begin{document}

\begin{frontmatter}

\title{Efficient a Posteriori Error Control of a Consistent Atomistic/Continuum Coupling Method for Two Dimensional Crystalline Defects}


\author[mymainaddress]{Yangshuai Wang}

\author[mysecondaryaddress]{Hao Wang\corref{mycorrespondingauthor}}
\cortext[mycorrespondingauthor]{Corresponding author}
\ead{wangh@scu.edu.cn}

\address[mymainaddress]{Department of Mathematics, University of British Columbia, 1984 Mathematics Road, Vancouver, Canada}
\address[mysecondaryaddress]{School of Mathematics, Sichuan University, No. 24 Yihuan Road, Chengdu, China}

\begin{abstract}
Adaptive atomistic/continuum (a/c) coupling method is an important method for the simulation of material and atomistic systems with defects to achieve the balance of accuracy and efficiency. Residual based {\it a posteriori} error estimator is often employed in the adaptive algorithm to provide an estimate of the error of the strain committed by applying the continuum approximation for the atomistic system and the finite element discretization in the continuum region. In this work, we propose a theory based approximation for the residual based {\it a posteriori} error estimator which greatly improves the efficiency of the adaptivity. In particular, the numerically expensive modeling residual is only computed exactly in a small region around the coupling interface but replaced by a theoretically justified approximation by the coarsening residual outside that region. We present a range of adaptive computations based on our {\it modified} {\it a posteriori} error estimator and its variants for different types of crystalline defects some of which are not considered in previous related literature of the adaptive a/c methods. The numerical results show that, compared with the {\it original} residual based error estimator, the adaptive algorithm using the {\it modified} error estimator with properly chosen parameters leads to the same optimal convergence rate of the error but reduces the computational cost by one order with respect to the number of degrees of freedom.

%
%
%
%
%
\end{abstract}

\begin{keyword}
Atomistic-to-continuum coupling, crystalline defects, a posteriori error estimate, 
adaptivity.
\MSC[2020] 65N12, 65N15, 70C20, 82D25
\end{keyword}

\end{frontmatter}

\section{Introduction}
\label{sec:introduction}

Atomistic/continuum (a/c) coupling methods are a class of concurrent multi-scale
schemes coupling molecular mechanics models of atomistic processes with continuum models of long-ranged elastic fields \cite{van2020roadmap, TadmorMiller:2012, LuOr:acta, Ortiz:1995a, Miller:2008, olson2014optimization, olson2016analysis}. The goal of the a/c methods is to combine the accuracy of the atomistic models and the efficiency of the continuum models for the simulation of atomistic systems, especially those with defects. In particular, the atomistic model is often applied in a small neighborhood of the localized defects such as vacancies and dislocations, while the continuum model is employed away from the defect cores. We refer to \cite{LuOr:acta} for an extensive overview and benchmarking of different a/c methods.

In the past two decades, a/c coupling methods have attracted considerable attention from both the
engineering community and the mathematical community \cite{LuOr:acta, 2013-defects, LiOrShVK:2014, OrZh:2016, 2011_BD_LC_TO_SP_CMAME, davis2022moving, chakraborty2021crystal}.
The quantitative estimates of the approximation error for the a/c coupling methods motivate and establish analytical frameworks for similar multiscale computational
methods \cite{2006_TO_SP_AR_PM_SISC, oden_2018, chakraborty2021concurrent, gupta2021nonequilibrium}.
The rigorous mathematical and numerical analysis for the a/c coupling methods provides the theoretical foundation for achieving (quasi-)optimal balance between accuracy and computational cost for the simulation of various crystalline defects \cite{olson2014optimization, olson2016analysis, LiOrShVK:2014, OrZh:2016, emingyang, OrtnerWang:2011, MiLu:2011, PRE-ac.2dcorners}.

While the modeling and the {\it a priori} analysis of a/c coupling methods have received extensive and comprehensive discussion by numerous previous works \cite{LuOr:acta, olson2014optimization, LiOrShVK:2014, OrtnerWang:2011, LinP:2006a}, only a few researches concern the {\it a posteriori} analysis and adaptivity which is the cornerstone for the applications of the methods in real world material system. 

The goal-oriented {\it a posteriori} error estimate is the first approach applied to the adaptive a/c problems. Oden and Prudhomme propose a general framework for the estimation of modeling error in computational mechanics in \cite{2003_TO_SP_JCP} which is later applied to the adaptive computation of a practical three dimensional nanoindentation problem in \cite{2006_SP_PB_TO_IJMCE} together with a fine discussion and an extension to the problem of molecular dynamics in \cite{2006_TO_SP_AR_PM_SISC}. Arndt and Luskin derive the goal-oriented error estimator for the one-dimensional Frenkel-Kontorova model in a series of works \cite{2007_MA_ML_IJMCEpdf, 2008_MA_ML_MMS, 2008_MA_ML_CMAME} where a detailed study of the model adaptivity and the mesh adaptivity using the goal-oriented approach is given. However, the a/c methods employed in the works just mentioned are the original energy-based quasicontinuum method (QCE) which, despite its comparatively easy implementation, is inconsistent and suffers from the ghost-force at the atomistic to continuum coupling interface \cite{LuOr:acta, OrtnerWang:2011, OrtnerShapeev:2011}. More recent advances in this direction are given in \cite{2011_BD_LC_TO_SP_CMAME} and \cite{2009_SP_LC_BD_PB_CMAME} where an adaptive modeling error control for an a/c method based on the Arlequin framework \cite{2004_BD_GR_IJNME} are analyzed. The advantage of the goal-oriented {\it a posteriori} error estimate is that it concentrates on the minimization of the error of some prescribed quantities of interests which often involve certain geometric parameters of the defect core, though the exact value of such quantities may not be easy to estimate so that a proper stopping criterion for the adaptive algorithm is yet to be proposed \cite{2011_BD_LC_TO_SP_CMAME}. In contrast, the residual based {\it a posteriori} error estimate, which essentially follows the classic adaptive finite element methods, provides a quantitative estimate of the error in certain global norms that are related more directly to the strain and the energy of the system \cite{Ortner:qnl.1d, OrtnerWang:2014, wang2018posteriori}. The residual based {\it a posteriori} error control is analyzed in \cite{Ortner:qnl.1d, OrtnerWang:2014} for consistent a/c coupling methods in one-dimension. The extension to two dimensional system with nearest neighbor interaction is first carried out in \cite{wang2018posteriori} where {\it rigorous} {\it a posteriori} estimates of the residual and the stability constant are derived. The analysis in \cite{wang2018posteriori} also clarifies the so-called ``stress tensor correction" which essentially distinguishes the high dimensional results with previous one dimensional ones. With the {\it a posteriori} error estimates and the corresponding adaptive algorithm, the numerical experiments in \cite{wang2018posteriori} achieve the adaptive movement of the a/c interface and the
mesh refinement in the continuum region. The two dimensional analysis is then extended to the case of finite range interactions in \cite{liao2018posteriori} by introducing a computable approximated stress tensor formulation. 
However, both of the two works consider only simple point defects (di-vacancy in \cite{wang2018posteriori} while di-vacancy and micro-crack in \cite{liao2018posteriori}). More importantly, the main drawback of the {\it original} residual based {\it a posteriori} error estimates proposed in these two works is the inefficiency of the resulting adaptive algorithms which comes from the high computational cost of the evaluation of the modeling residual. The numerical experiments in \cite{liao2018posteriori} find that the CPU time of computing the modeling residual dramatically exceeds that of solving the a/c coupling problem.

The purpose of this work is to develop an efficient adaptive algorithm based on a theory based approximation for the residual based {\it a posteriori} error estimator for a consistent a/c coupling scheme and apply the algorithm to the adaptive simulations of different defects in two dimensions. In particular, we prove that the modeling residual, which is expensive to compute, and the coarsening residual, whose computational cost is marginal compared with that of solving the a/c problem on a given mesh, essentially follow a ratio relation under certain assumptions. Based on this theoretical result, we then propose various {\it modified} residual based {\it a posteriori} error estimator and mesh structures so that the modeling residual is only exactly computed in a relatively small neighborhood of the coupling interface but properly approximated by the coarsening error away from the defect core. We finally conduct a range of adaptive computations, using the {\it modified} estimators as well as the {\it original} residual based estimator and {\it a priori} graded meshes if applicable, for the three typical types of defects, namely micro-crack, anti-plane screw dislocation and multiple vacancies, where the later two are considered for the first time in the research of the adaptive a/c methods. The numerical results show that our {\it modified} error estimator with properly chosen parameters leads to the same convergence behavior as the  {\it original} error estimator while gain the efficiency by reducing the computational cost by one order with respect to the number of degrees of freedom. In addition, the advantage of the adaptive a/c methods is clearly demonstrated in the case of multiple vacancies where no {\it a priori} graded mesh may be derived at first. Such defect can also be considered as a simplification of the more complicated but realistic models which consider the interactions of multiple defects where the adaptive a/c may better reveal its power.

We constrain ourselves to the GRAC23 derived in \cite{PRE-ac.2dcorners} with the nearest neighbor interactions to clearly present the ideas and steps, though the analysis and constructions can be in principle extended to finite range interactions and to other a/c methods. We give a further remark for this in \S~\ref{sec:conclusion}.

\subsection{Outline}
\label{sec: outline}

%
In \S~\ref{sec:formulation} we introduce the crystalline defects and set up the atomistic, the continuum and the geometric reconstruction based atomistic/continuum (GRAC) coupling scheme considered in this work. 
In \S~\ref{sec:error} we first review the {\it a priori} and {\it a posteriori} error estimates for the a/c coupling scheme previously given in \cite{PRE-ac.2dcorners, wang2018posteriori, COLZ2013}. We then explain where the high computational cost previously noticed in \cite{liao2018posteriori} comes and present the theoretical analysis as well as the numerical justification of the relation between the modeling and the coarsening residual estimators. 
In \S~\ref{sec:eff_adapt_algo} we propose the modified error estimator and the corresponding adaptive algorithm based on the theoretical and numerical results in the previous section.
In \S~\ref{sec:numerics} we present the numerical results for the adaptive computations for the crystalline defects we consider and give a thorough discussion and explanation. We draw conclusions and discuss possible future works in \S~\ref{sec:conclusion}.

\subsection{Notations}
\label{sec:sub:not}

We use the symbol $\langle\cdot,\cdot\rangle$ to denote an abstract duality
pairing between a Banach space and its dual space. The symbol $|\cdot|$ normally
denotes the Euclidean or Frobenius norm, while $\|\cdot\|$ denotes an operator
norm.
%
%
For $E \in C^2(X)$, the first and second variations are denoted by
$\<\delta E(u), v\>$ and $\<\delta^2 E(u) v, w\>$ for $u,v,w\in X$.
%
The closed ball with radius $r>0$ and center $x$ is denoted by $B_r(x)$, or $B_r$ if the center is the origin.


\section{Formulation of the Atomistic Model and its A/C Approximation}
\label{sec:formulation}

We present the atomistic fomulation and its a/c approximation of crystalline defects in an two-dimensional infinite lattice in this section. We keep the presentation as concise as possible since much of the detail can be found in various earlier works \cite{OrZh:2016, PRE-ac.2dcorners, wang2018posteriori, liao2018posteriori, COLZ2013}.




\subsection{Atomistic model}
\label{sec:formulation:atm}
%
%

\def\Rcore{B_{R^{\rm def}_i}}
\def\dfct{{\rm def}}

We denote a perfect single lattice which possesses no defects by $\Lhom:=\mathsf{A}\Z^2$ where $\mathsf{A} \in \R^{2 \times 2}$ can be considered as a macroscopic deformation gradient that often represents a uniform stretch or compression of a crystalline. To model local defects of a crystal lattice, we employ $\L\subset \R^2$ which satisfies the following assumption: 
\begin{assumption}\label{as:local}
$\exists~\Omega^\dfct :=\cup_{i=1}^{M} \Rcore(x_i)$,~such that~$\L\backslash \Omega^\dfct = \Lhom\backslash \Omega^\dfct$~and~$\L \cap \Omega^\dfct$ is finite.
\end{assumption}
The region $\Rcore(x_i)$ often contains localized defects with a center positioned at $x_i$ and is called a defect core. Such formulation can represent different types of defects. In particular, for point defects such as vacancies and micro-cracks we have $\L \subset \Lhom$, and for an anti-plane screw dislocation we have $\L \equiv \Lhom$. 

We denote a deformed configuration by a lattice function $y \in \Us$ where $\Us := \{v: \L\to \mathbb{R}^2 \}$ is the set of vector-valued lattice functions. The displacement $u\in \Us$ is then defined accordingly by $u(\ell) = y(\ell) - y_0(\ell) = y(\ell)-x(\ell) - u_0(\ell) = y(\ell) - \ell - u_0(\ell)$ for any $\ell\in\L$ if we define $x\in \Us$ to be the identity map. We consider $u_0 \in \Us$ as the {\it predictor}, which is usually a far-field crystalline environment, and $u\in \Us$ the {\it corrector} \cite{2013-defects,2021-defectexpansion}. We simply take $u_0=0$ for point defects and the derivation of $u_0$ for anti-plane screw dislocation is reviewed in  \ref{sec:appendix}. We refer to Figure \ref{fig:geom} for the illustration of the geometry of the three different types of defects we consider in this work, namely micro-crack, anti-plane screw dislocation and multiple vacancies.

%

\begin{figure}[!htb]
	\centering 
	\subfloat[Micro-crack]{
		\label{fig:geom_mcrack}
		\includegraphics[height=5cm]{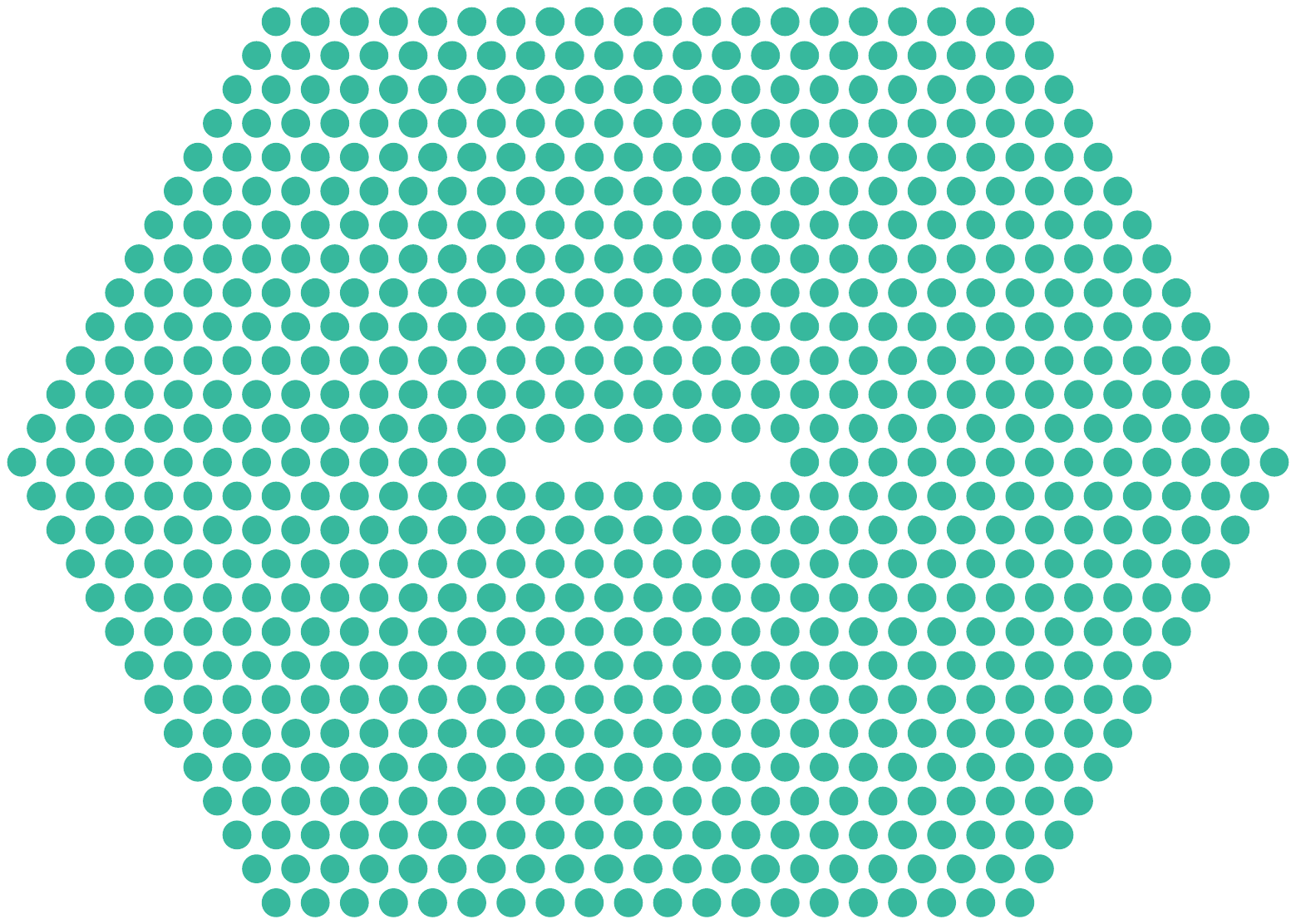}}
	\hspace{0.3cm} 
		\subfloat[Anti-plane screw dislocation (colors by {\it predictor} $u_0$)]{
		\label{fig:geom_screw} 
		\includegraphics[height=5cm]{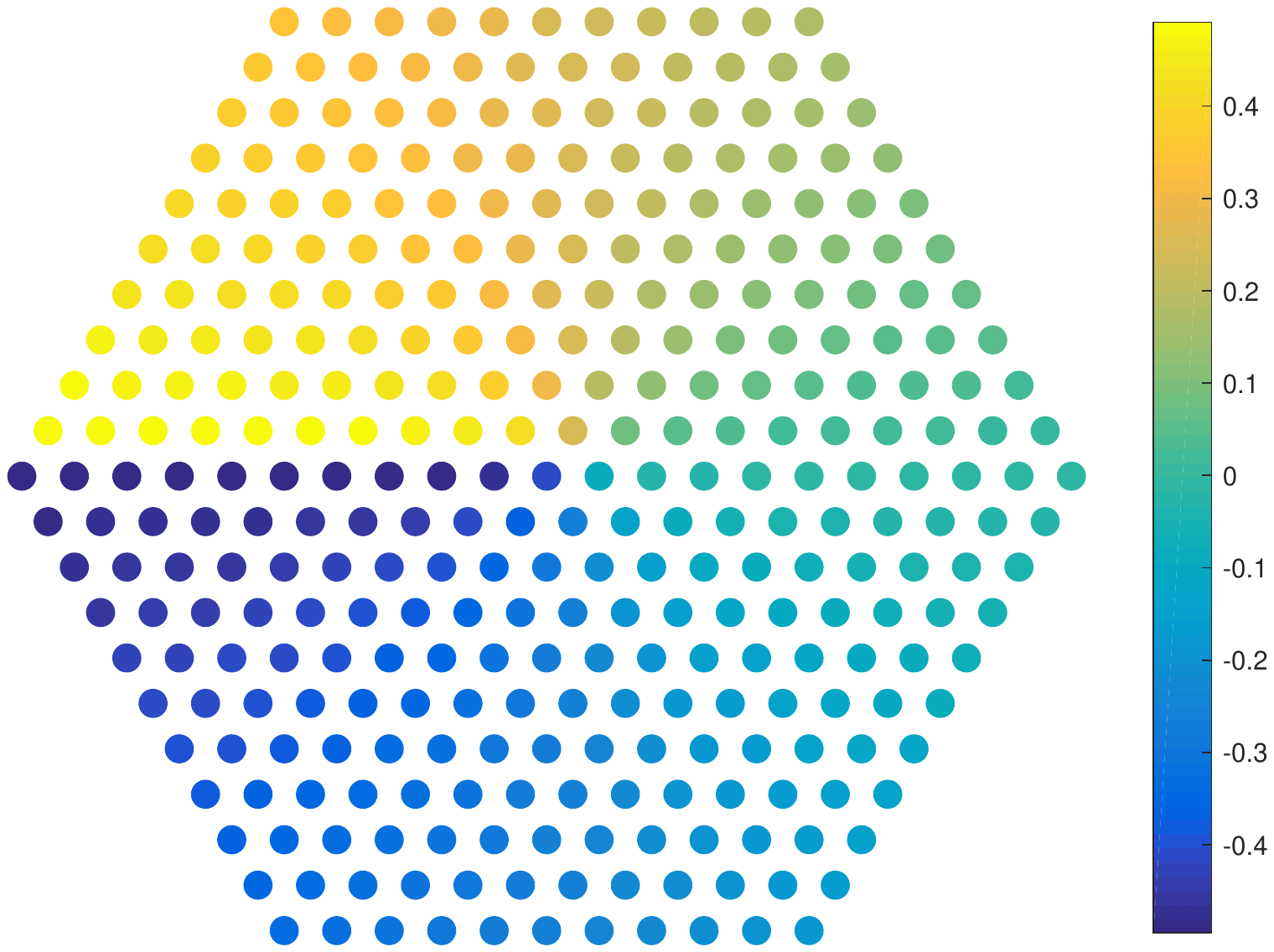}}
	\hspace{0.5cm} 
	\subfloat[Multiple separate vacancies]{
		\label{fig:geom_multivac}
		\includegraphics[height=5cm]{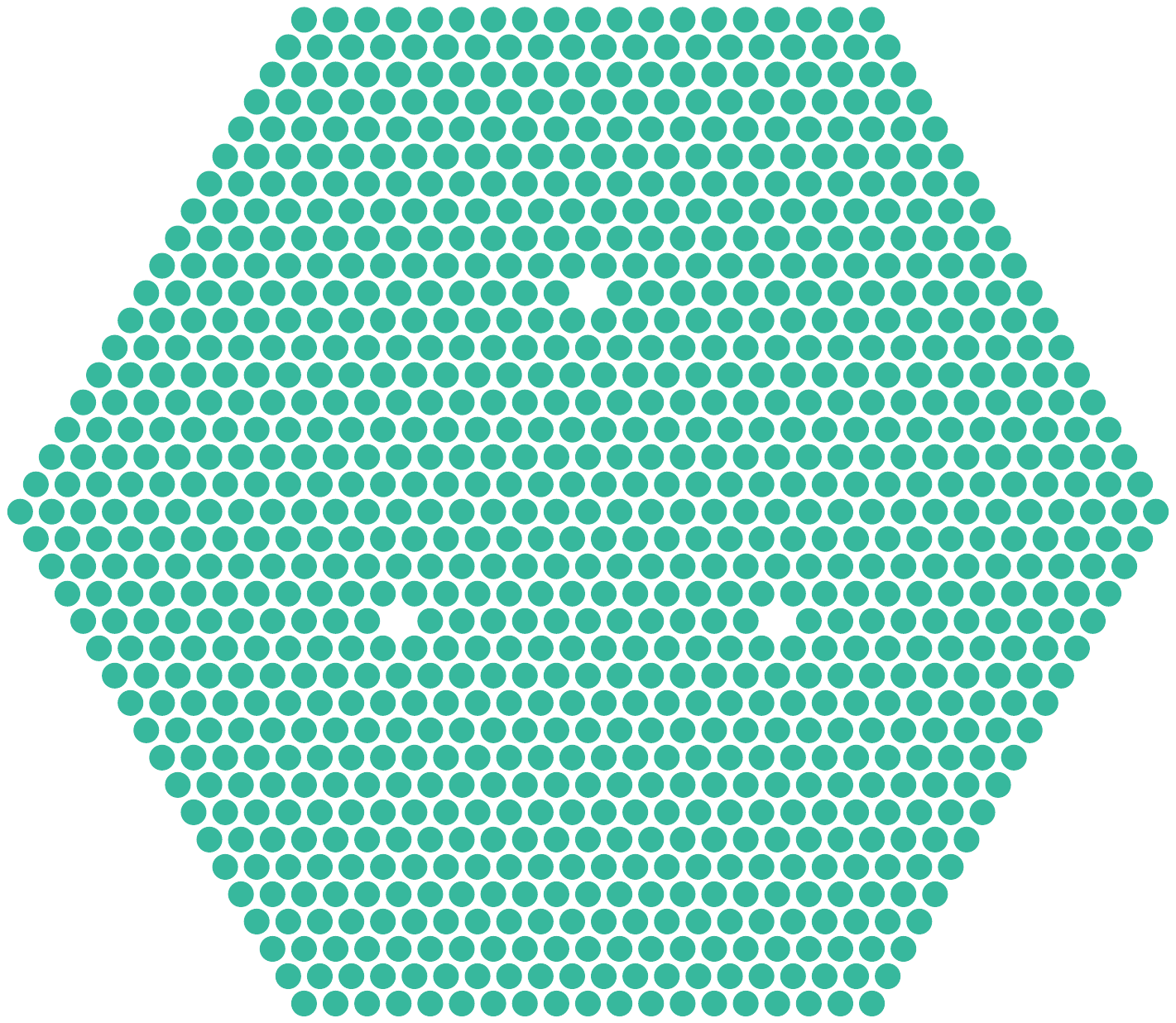}}		
	\caption{Illustration of three typical defects considered in this work.}
	\label{fig:geom}
\end{figure}

\begin{remark}
Assumption \ref{as:local} is presented in a slightly different form as that in various earlier works such as \cite{2013-defects, wang2018posteriori, 2021-defectexpansion, 2021-qmmm3}, where only a single round shaped defect core $B_{R^\dfct}$ is assumed. We decide to use the current formulation to include the case of multiple vacancies which has not been considered before. In a/c adaptive computations, the defect core is often contained in the atomistic region and the allowance of separating the defect core into a number of round shaped regions $\Rcore(x_i)$ facilitates the description of the initial steps of the adaptivity which are clearly visualized in Figure \ref{figs:evolution} in \S~\ref{sec:numerics:multiple vacancies}. However, we note that whether the defects are separated or ``centralized" have no fundamental influence on the analytical results we derive in the current work. Only the condition that $\L \cap \Omega^{\dfct} = \L \cap \cup_{i=1}^{M} \Rcore(x_i) $ (or $\L \cap B_{R^\dfct}$ in the case of a single round shaped defect core) plays the role which essentially means that the defect core can not appear at infinity.
\end{remark}

We define the interaction neighborhood for each $\ell\in \L$ by $\Nhd_{\ell} := \{ \ell' \in \L \setsep 0<|\ell'-\ell| \leq r_{\rm cut} \}$, where $r_{\rm cut}$ is called the cut-off radius and is always equal to $1$ in the current work since we only consider the nearest neighbor interaction. We also denote the interaction range $\Rg_\ell := \{\ell'-\ell \setsep \ell'\in \Nhd_\ell\}$ as the union of lattice vectors defined by the finite differences between the atom $\ell$ and the lattice points in its neighborhood $\Nhd_{\ell}$.

The finite difference stencil for each $\ell \in \Lambda$ is then defined by $Dv(\ell):= \{D_\rho v(\ell)\}_{\rho \in \Rg_\ell} :=\{v(\ell+\rho)-v(\ell)\}_{\rho \in \Rg_\ell}$. The lattice energy-norm (essentially a discrete $H^1$-semi-norm), which we use to measure the difference or error for lattice functions, is then defined by
\begin{equation}
  \| Dv \|_{\ell^2} := \bg( \sum_{\ell \in \L}
  \sum_{\rho \in \Rg_\ell} |D_\rho v(\ell)|^2
  \bg)^{1/2}, \ \ \forall v \in \Us.
\end{equation}



\def\Use{\Us^{1,2}}

To measure the local ``regularity" of a displacement function $u \in \Us$, it is convenient to assume the existence of a background mesh, that is, a {\it canonical} triangulation $\T_{\a}$ of $\R^2$ into triangles whose nodes are the reference lattice sites $\L$. We refer to \cite[Figure 1]{2013-defects} for the illustration of such triangulation in 2D. 
We then introduce a standard piecewise affine interpolation of $u$ with respect to $\T_{\a}$ by $I_{\a} u$. Notice that $\L$ is a lattice with some vacancies when we consider micro-crack and multiple vacancies. We can thus construct the piecewise interpolant with respect to $\Lhom$ by extending $u$ to the vacancy sites (see \cite[Appendix 1]{wang2018posteriori} for detail).



We identify $u=I_{\rm a}u$ if there is no confusion and denote the piecewise constant gradient $\nabla u := \nabla I_{\rm a} u$. We then introduce the discrete homogeneous Sobolev space
\begin{displaymath}
	\Use :=\{u\in \Us ~|~ \nabla u\in L^2\},
\end{displaymath}
with the semi-norm $|u|_{\Use} :=\|\nabla u\|_{L^2}$.

%




%

The atomistic system we consider is modeled by the site energy $V_\ell$ associated with the lattice points where $V_{\ell}:(\R^2)^{\Rg_\ell}\rightarrow\R$ represents the energy contribution by atom $\ell$. We assume that $V_\ell \in C^k((\R^2)^{\Rg_\ell}), k \geq 2$ and $V_\ell$ is \textit{homogeneous} outside the defect core $\Omega^{\dfct}$, namely, $V_\ell = V$ and $\Rg_\ell = \Rg$ for $\ell \in \Lambda \setminus \Omega^{\dfct}$. A further condition of point symmetry $\Rg = -\Rg$, and $V(\{-g_{-\rho}\}_{\rho\in\Rg}) = V(g)$ is also assumed to keep the homogeneity of the system outside the defect core \cite{2013-defects}.

We note that $V_\ell$ may vary according to $\ell$ inside each ${\Rcore}(x_i)$ so that certain defects can be modeled.  We also note that a great number of practical interatomic potentials satisfy the assumptions of smoothness and symmetry listed in the last paragraph including the widely used embedded atom method (EAM) \cite{Daw:1984a} and Finnis-Sinclair model \cite{FS:1984} so that it is not a serious restriction for realistic atomistic or material systems. We will use the empirical EAM potential throughout our numerical experiments, see \S~\ref{sec:numerics} for detail.

Since the energy of an infinite lattice is typically ill-defined, with a slight abuse of notation, we redefine the potential $V_\ell(y)$ as the energy difference $V_\ell(y) - V_\ell(x)$ for point defects, which is equivalent to assuming $V_{\ell}(x)=0$. As for anti-plane screw dislocation, we denote $V_{\ell}(y):=V(y)-V(y_0)$ (cf. \cite[Section 3.1]{2013-defects}). The energy functional of the atomistic system
\begin{equation}
\label{eqn:Ea}
  \Ea(y) = \sum_{\ell \in \L} V_\ell(y)
\end{equation}
is then a meaningful object. By the assumptions of smoothness and symmetry on $V_\ell$, $\Ea(y)$ is well-defined for $y-y^{B}  \in \Use$ with certain Fr\'{e}chet differentiability, where $y^B(x)=Bx+u_0(x)$. We refer to \cite{2013-defects} for a detailed discussion.


The atomistic problem we would like to solve is to find a \textit{strongly stable}
equilibrium $y$, such that, given a macroscopic applied strain $B\in\R^{2\times 2}$, 
\begin{equation}
  \label{eq:min}
  y \in \arg\min \b\{ \Ea(y)~\bsetsep~y-y^B\in \Use \b\}.
\end{equation}
The \textit{strong stability} on $y$ means that there exists a constant $c_0 > 0$ such that
\begin{displaymath}
\< \ddel \Ea(y) v, v \> \geq c_0 \| \nabla v \|_{L^2}^2, \quad \forall v \in \Us^{1,2}.
\end{displaymath}

\subsection{GRAC coupling method}
\label{sec:formulation:ac}

\def\Th{\mathcal{T}_h}
\def\Nh{\mathcal{N}_h}
\def\Ush{\Us_h}
\def\Ra{R^\a}
\def\Rb{R^{\rm b}}
\def\Rc{R^\c}
\def\Eb{\E^{\rm b}}
\def\dof{{\rm DOF}}
\def\Omh{\Omega_h}
\def\Thr{{\T_{h,R}}}
\def\vor{\rm vor}
\def\Uhr{\Us_{h,R}}

%
The atomistic problem \eqref{eq:min} is not computable in practice since it is defined on an infinite domain and it considers every atom as a degree of freedom. The former problem is easily circumvented by restricting the computational domain to be $\Omega \subset \R^2$ which is a simply connected polygonal and we assume that $\exists R \in \R$ such that $B_{R} \subset \Omega \subset B_{c_0 R}$ for some $c_0 >1$ where $R$ is the radius of $\Omega$.

To deal with the later issue, namely the overwhelming number of degrees of freedom, we first define a continuum approximation by the Cauchy-Born rule which is both compatible with \eqref{eqn:Ea} and related to the strain. The Cauchy-Born energy density functional $W : \R^{2 \times 2} \to \R$ is simply given by
\begin{displaymath}
  W(\mF) := \det (\mA^{-1}) \cdot V(\mF \mathcal{R}),
\end{displaymath}
where $V$ is the homogeneous site energy potential on $\Lhom$ \cite{E:2007a, OrtnerTheil2012}. An atomistic-to-continuum (a/c) coupling method then essentially couples the atomistic model with its Cauchy-Born approximation together. To formulate our coupling scheme, we first decompose the computational domain into two parts: $\Omega = \Omega^\a \bigcup \Omega^\c$ where the defect core $\Omega^{\dfct}$ is contained in $\Omega^\a$. Next, we decompose the set of atoms inside the atomistic region into a core atomistic set $\L^\a$ and an interface set $\L^\i$ (usually a few ``layers'' of atoms surrounding $\L^\a$ for each $\Rcore(x_i)$) such that $\L^{\a,\i} := \L \bigcap \Omega^\a = \L^\a \bigcup \L^\i$ and $\L\cap\Omega^{\dfct}$ is contained in $\L^\a$.


To reduce the number of degrees of freedom, we introduce the finite element partition of the computational domain $\Th$ so that $\Th = \T^{\c}_{h} \bigcup \T^{\a}_{h}$ where $\T^\a_{h}$ is the {\it canonical} triangulation induced by $\L^{\a,\i}$ and $\T^{\c}_{h}$ be a shape-regular simplicial partition (triangles for 2D) of the continuum region $\Omega^\c$. We note that $\T^\a_{h}$ may contain "holes" due to the existence of defects (see Figure \ref{figs:plotMesh} for an illustration). We define $\Nh$ be the set of nodes in $\Th$ and $\mathcal{F}_h^{\c}$ be the set of edges in $\Th^{\c}$.

Assuming $P_1$ finite element are used on $\Th$, the general formulation of an energy-based atomistic-to-continuum coupling energy functional can be written as 
\begin{align}
  \label{eq:generic_ac_energy}
  \Eh(y_{h}) := & \sum_{ \ell \in \L^\a}  V_\ell(y_h)  + \sum_{\ell \in \L^\i} \omega_\ell V^\i_\ell(y_h)  + \sum_{T \in \Th} \omega_T  W(\D y_h|_T),   
\end{align}
where $V_\ell^\i$ is a modified interface site potential and $\omega_\ell$ and $\omega_T$ are certain coefficients which are usually called effective volumes of lattice sites or elements (see \cite[Section 2.2]{PRE-ac.2dcorners} for a detailed discussion). We note that $\omega_T=0$ for $T\in\T_h^{\a}\setminus\T_h^{\rm i}$ with $\T_h^{\rm i}:=\{T\in\T_h: \L^{\rm i}\cap T \neq \emptyset\}$.

The interface potential in \eqref{eq:generic_ac_energy} is the key for constructing a consistent a/c coupling method. In the current work, we restrict ourselves to the geometric reconstruction based consistent a/c (GRAC) coupling method which is first proposed in \cite{PRE-ac.2dcorners} and then further developed in \cite{COLZ2013}. The essential idea for GRAC method is to introduce the geometric reconstruction parameters $C_{\ell;\rho,\vsig}$ so that for each $\ell \in\L^\i, \rho, \vsig \in \Rg_\ell$ the interface potential is redefined by 
\begin{equation}
  \label{eq:defn_Phi_int}
  V_\ell^\i(y) := V \B( \b( {\textstyle \sum_{\vsig \in
      \Rg_\ell} C_{\ell;\rho,\vsig} D_\vsig y(\ell) } \b)_{\rho \in
    \Rg_\ell} \B).
\end{equation}
The geometric reconstruction parameters are then determined by solving the following equations:
\begin{equation}
  \label{eq:energy_pt}
  V_\ell^\i(y^{\mF}) = V(y^{\mF}) \qquad \quad \forall\ell \in \L^\i, \quad \mF \in \R^{d
    \times d},
\end{equation}
and 
\begin{equation}
  \label{eq:force_pt}
  \< \del \Eh(y^{\mF}), v \> = 0 \qquad \forall v \in \UsT,
  \quad \mF \in \R^{d \times d},
\end{equation}
which are termed as the energy patch test consistency and the force patch test consistency \cite{Or:2011a}. According to \cite[Proposition 3.7]{PRE-ac.2dcorners}, under the setting of the current work, for $\ell\in\L^{\i}$, the parameters $C_{\ell;\rho, \zeta}$ in \eqref{eq:defn_Phi_int} are determined by: (a) $C_{\ell, \rho, \vsig} = 0$ for $|(\rho - \vsig)~\textrm{mod}~6| > 1$; (b) $C_{\ell, \rho, \rho-1} = C_{\ell, \rho, \rho+1} = 1 - C_{\ell, \rho, \rho}$; (c) $C_{\ell, \rho, \rho} = C_{\ell+\rho, -\rho, -\rho} = 1$ for $\ell+\rho\in\L^{\i}$; (d) $C_{\ell, \rho, \rho}=1$ for $\ell+\rho \in \L^{\a}$; (e) $C_{\ell, \rho, \rho}=2/3$ for $\ell+\rho\in \L^{\c}$. The coefficient $2/3$ given in (e) introduces the name of the specific GRAC method we consider in the rest of this paper, which is the GRAC23 method. However, we need to note that these coefficients are not unique and how they are optimally determined are discussed in depth in \cite{COLZ2013}.

Let $\Omega_{h} = \bigcup_{T\in \Th} T$. We define the space of \textit{coarse-grained} displacements to be
\begin{align*}
  \Us_{h} := \b\{ u_h : \Omega_{h} \to \R^2 \bsetsep &
  \text{ $u_h$ is continuous and p.w. affine w.r.t. $\T_{h}$, } \\[-1mm]
  & \text{ $u_h = 0$ on $\partial \Omega_h$ } \b\}.
\end{align*}
The a/c coupling problem we would like to solve is to find  
\begin{equation}
  \label{eq:min_ac}
  y_{h} \in \arg\min \b\{ \Eh(y_h)~\bsetsep~ y_h - y^B  \in
  \Us_{h} \b\}.
\end{equation}

\def\Ta{\T_\a}
\def\Th{\T_{h}}
\def\sh{\sigma^{\ac}}

\subsubsection{The formulation of the stress tensors}
\label{sec:formulation:stress}
The stress tensor, which is in principle defined by the first variation of the energy, plays the key role in both {\it a priori} and {\it a posteriori} error analysis of the a/c coupling methods. For any $y -y^B \in \Use$, and $y_h-y^B\in \Ush$, we define the atomistic stress tensor $\sa(y; \cdot)\in \PO(\Ta)^{2\times 2}$, the continuum stress tensor $\sigc(y_h; \cdot) \in \PO(\Th)^{2\times 2}$ and the a/c stress tensor $\sh(y_h; \cdot) \in \PO(\Th)^{2\times 2}$ respectively by the following identities
\begin{align}
  \<\del\Ea(y),v\>&=\sum_{T\in\Ta}|T|\sa(y;T) \nabla_T v, \qquad  \forall v\in\Use, 
  \label{eq:atomstress}\\
  \<\del\Ec(y_h),v_h\>&=\sum_{T\in\Th}|T|\sigc(y_h;T) \nabla_T v_h, \qquad \forall v_h\in\Ush,
  \label{eq:contstress}\\
    \<\del\Eh(y_h),v_h\>&=\sum_{T\in\Th}|T|\sh(y_h;T) \nabla_T v_h, \qquad \forall v_h\in\Ush,
  \label{eq:acstress}
\end{align}
where $\Ta$ is the {\it canonical} triangulation induced by the reference lattice $\L$ and~$\Ec(y_h):=\sum\limits_{T\in\Th}|T|W(\nabla_T y_h)$. Since we only consider nearest neighbor interactions, we can choose the following stress tensors from the first variations~\eqref{eq:atomstress}-\eqref{eq:acstress},
\begin{align}
    \label{eq:defn_Sa}
    \sa(y; T) :=~& \frac{1}{\det \mA} \sum_{b=(\ell, \ell+\rho)\in \partial T} \partial_\rho V_\ell \otimes
    a_\rho, \\
    \label{eq:defn_Sc}
    \sigc(y_h; T) :=~& \pp W(\Dc{T} y_h)=\frac{1}{\det \mA} \sum_{j = 1}^6 \partial_j V(\nabla_T y_h) \otimes a_j, \\
    \label{eq:defn_Sh}
    \sh(y_h; T) :=~\frac{1}{\det\mA}&\sum_{b=(\ell, \ell+\rho)\in \partial T} \partial_\rho V_\ell(\Ia y_h) \otimes
    a_\rho + \omega_T \sigc(y_h; T).
\end{align}
We note that the stress tensors defined in \eqref{eq:atomstress}-\eqref{eq:acstress} are not unique. Equation \eqref{eq:atomstress}-\eqref{eq:acstress} may hold with any of the stress tensors differ from \eqref{eq:defn_Sa}-\eqref{eq:defn_Sh} with a piecewise constant tensor field $\sigma\in \PO(\T)^{2\times 2}$ that satisfies the \textit{divergence free} condition
\begin{equation}
\label{eq: divergence_free_condition}
	\sum_{T\in\T}|T|\sigma(T):\Dc{T} v\equiv 0, \qquad \forall v\in ({\rm P}_1(\T))^2.
\end{equation}
We postpone the detail of the construction and optimization of such divergence free field to \S~\ref{sec:Interfacial-stress} where a local stress tensor correction is applied to the coupling stress tensor $\sigma^{\rm ac}$.

\section{The Analysis of the Residual Based A Posteriori Error Estimates}
\label{sec:error}

In this section, we give the {\it a posteriori} error estimates for the GRAC method introduced in \S~\ref{sec:formulation:ac}. We first present some useful {\it a priori} results in \S~\ref{sec:sub:aprior} that motivates the separation of the {\it a posteriori} error estimator. We then give a brief review of the {\it original} residual based {\it a posteriori} error estimates in \S~\ref{sec:sub:aprior} which is derived and analyzed in \cite{wang2018posteriori}. The error estimates provide a solid foundation of the current work but the problem of high computational cost arise in computing the modeling part of the {\it a posteriori} error estimator. We finally concentrate on the analysis and numerical verification of the relation between the modeling and the coarsening parts of the estimator whose quotient in fact possesses a certain ratio under suitable assumptions. Such ratio relation serves as the theoretical basis for deriving modified {\it a posteriori} error estimators with much better computational efficiency that are discussed in \S~\ref{sec:eff_adapt_algo} and \S~\ref{sec:numerics}. We note that the content in \S~\ref{sec:sub:aprior} and \S~\ref{sec:error:aposterior} can be found in detail in earlier works \cite{wang2018posteriori, liao2018posteriori} and we include them only for completeness and refer to related literature where necessary.


%

\subsection{A Priori Error Estimates}
\label{sec:sub:aprior}

If $y$ is a strongly stable solution of \eqref{eq:min} and is sufficiently smooth in the continuum region, there exists a strongly stable solution $y_{h}$ to \eqref{eq:min_ac} and a constant $C^{a-priori}$ for GRAC method such that \cite{2013-defects, LuOr:acta},
\begin{align}\label{eq:ap}
	\|\nabla u_{h} - \nabla u\|_{L^2} \leq C^{a-priori} \big( \|D^2u\|_{\ell^2(\mathcal{I}_{\rm ext})} + \|D^3u\|_{\ell^2(\L\cap\Omega^\c)} + \|D^2u\|^2_{\ell^4(\L\cap\Omega^\c)} \nonumber \\
	+ \|hD^2 u\|_{\ell^2(\L\cap\Omega^\c)}+\|Du\|_{\ell^2(\L \setminus B_{R/2})}\big),
\end{align}
where $u = y-y^B, u_{h} = y_{h} - y^B$ and $\mathcal{I}_{\rm ext}$ is the extended interface region whose definition is given in \cite[\S~5]{PRE-ac.2dcorners}. The right hand side of \eqref{eq:ap} can be essentially split into three parts and we give a discussion here as 
it helps to understand the correspondence between the {\it a priori} error estimates and the {\it a posteriori} error estimates.

The first part is the \textit{modeling error} defined by $\zeta_\mo:=\|D^2u\|_{\ell^2(\mathcal{I}_{\rm ext})} + \|D^3u\|_{\ell^2(\L\cap\Omega^\c)} + \|D^2u\|^2_{\ell^4(\L\cap\Omega^\c)}$. The error is due to the difference between the atomistic model and the continuum model which lies in both the discrepancy of the nonlocality and the locality of the atomistic and the coupling energy and the lattice and mesh on which we solve the problems. This is the fundamentally different from the discretization of a PDE using certain numerical schemes, see \cite{PRE-ac.2dcorners, wang2018posteriori} for further discussion.  

The second part is the \textit{coarsening error} denoted by $\zeta_\cg:=\|hD^2 u\|_{\ell^2(\L\cap\Omega^\c)}$. This is caused by the application of the finite element method in the continuum region and is a reminiscence of the discretization error of the numerical solution to an elliptic differential equation.

The third part is the \textit{truncation error} denoted by $\zeta_\trc:=\|Du\|_{\ell^2(\L\setminus B_{R/2})}$, which arises from the finite size of our computational domain. We note that the region $B_{R/2}$ comes from the construction of the truncation operator, see \cite[Lemma 7.3]{2013-defects} for a better detail.

\subsection{Residual based a posterior error estimates}
\label{sec:error:aposterior}
%


We derive the {\it a posteriori} error estimates based on the residual for the a/c method in this section. To avoid the unnecessary complexity of the presentation while keeping the main ideas and steps, we restrict our discussion to the case of the GRAC23 method with nearest neighbor multibody interactions. We refer to \cite{liao2018posteriori} for the extension to the finite range interaction. We also note the derivation is concise as the detail can be found in \cite{wang2018posteriori}.


\def\Pdef{\mathscr{P}^{\rm def}}
\def\Usc{\Us^c}
\def\Use{\Us^{1,2}}
\def\Usrh{\Us_R^h}


The first variation of the atomistic variational problem \eqref{eq:min} and that of the a/c coupling problem  \eqref{eq:min_ac}  are to find $y-y^B \in \Use$ and $y_h - y^B \in \Us_{h}$ such that 
\begin{equation}
\label{eqn:firstvariationea}
	\<\del\Ea(y), v\> = 0, \quad \forall v \in \Use, \text{ and } \<\del\Eh(y_h), v_h\> = 0, \quad \forall v_h\in \Us_{h},
\end{equation}
respectively. Since we limit ourselves in a finite computational domain $\Omega$, we choose the same truncation operator $T_R: \Us^{1,2} \to \Us_R$ where $\Us_R := \{u \in \Use~|~u(x) = 0,~\forall x\in \L \backslash \Omega\}$. 




The residual operator $\mR$ is then defined as an operator on $\Use$ and $\mR[v]$ is given by 
\begin{equation}
\mR[v] = \< \del\Ea(\Ia y_h), v\>, \quad \forall v\in \Use.
\end{equation}
Let $v_R = T_R v$, and $v_h = \Cs_h T_R v: \Use\to \Us_{h} $, where $\Cs_h: \Us_R\to \Us_{h}$ is the modified Cl\'{e}ment operator \cite{wang2018posteriori, clement:1975, Verf:1999}. By the second equation in \eqref{eqn:firstvariationea}, we separate the residual $\mR[v]$ into three groups
\begin{align}
\label{eq: residual_separation}
	\mR[v] = \< \del\Ea(\Ia y_h), v\>  = & \< \del\Ea(\Ia y_h), v\> - \<\del\Eh(y_h), v_h\> \notag \\
	 = & \big\{ \< \del\Ea(\Ia y_h), v_R\> - [ \del\Eh(y_h),  v_R] \big\} + \big \{  [ \del\Eh(y_h), v_R] - \<\del\Eh(y_h), v_h\> \big\} \notag \\
	 &+ \big\{\< \del\Ea(\Ia y_h), v\> - \< \del\Ea(\Ia y_h), v_R\> \big\}
	=: \mR_\mo + \mR_\cg + \mR_\trc,
\end{align}
where the operator $[\cdot, \cdot]$ is defined by
\begin{align}
	[ \del\Eh(y_h), v_R]:= \sum_{T\in\Th}\int_T\sh(y_h, T) \nabla v_R \dx 
				       = & \sum_{T\in\Th}\sh(y_h, T) (\sum_{T'\in\Ta, T'\bigcap T \neq \emptyset}|T \bigcap T'|\nabla v_R) \notag \\
				       = &\sum_{T\in \Ta}|T| \big(\sum_{T'\in \Th, T'\bigcap T\neq \emptyset}\frac{|T'\bigcap T|}{|T|}\sh(y_h, T')\big)\nabla v_R, \label{eq:ehvr}
\end{align}
which is different from $\<\cdot, \cdot\>$ since $v_R\notin \Us_h$. We essentially separate the residual into the modeling part $\mR_\mo$, the the coarsening part $\mR_\cg$ and the truncation part $\mR_\tr$.




\def\Tr{\rm Tr}


The estimate of the truncation residual $\mR_\trc$ is the easiest among the three and is simply given by 
\begin{equation}
\label{eqn:etatrc}
	|\mR_\trc| \leq C^{\trc}\|\sa(\Ia u_{h})-\sigma^0\|_{L^2(\Omega \setminus B_{R/2})}\|\nabla v \|_{L^2} =: \eta_\trc(y_{h})  \|\nabla v \|_{L^2},
\end{equation}
where $C^{\trc}$ is the constant appeared when estimating the difference of $Dv_R$ and $Dv$ in terms of $Dv$, and  we can specify the divergence-free stress tensor $\sigma^0 = \pp W(\nabla y^B)$ (cf. \cite[\S 3.1.1]{wang2018posteriori}).

\def\sjump{\llbracket\sh\rrbracket_f}
\def\dx {\rm dx}
\def\Fh {\mathcal{F}_h}
\def\Fhi{\Fh^{\rm c}\bigcap {\rm int}(\Omega)}

The coarsening residual $\mR_\cg$ is estimated by 
\begin{equation}
\label{eqn:etacg}
	|\mR_\cg| \leq \sqrt{3}C^{\rm tr}C_{\Th}(\sum_{f\in\Fhi} (h_f\sjump)^2)^\frac12 \|\nabla v\|_{L^2(\Omega)}=: \eta_\cg(y_h) \|\nabla v\|_{L^2(\Omega)},
\end{equation}
where $\Fhi$ is the set of interior edges, $\llbracket\sh\rrbracket_f:=\sh(y_h, \Tp_f)\np+\sh(y_h, \Tm_f)\nm$ denotes the jump of $\sh$ across each interior edge $f$ and $C_{\Th}$ is the constant for the modified Cl\'{e}ment operator that only depends on the shape regularity of the mesh. We refer to \cite[\S 3.1.3]{wang2018posteriori} for a thorough discussion.


For the analysis of the modeling error $\mR_\mo$, the formulation of $\< \del\Ea(\Ia y_h), v_R\>$ and  $[ \del\Eh(y_h), v_R]$ given in \eqref{eq:acstress} and \eqref{eq:ehvr} lead to \cite[\S 3.1.2]{wang2018posteriori},
\begin{equation}
\label{eqn:etamo}
	|\mR_\mo| \leq C^{\trc}\big\{\sum_{T'\in\Ta}|T|\big[\sa(\Ia y_h, T') - \sum_{T\in \T_h, T\bigcap T'\neq \emptyset}\frac{|T\bigcap T'|}{|T'|}\sh(y_h, T)\big]^2\big\}^{\frac12}\|\nabla v\|_{L^2} =: \eta_\mo(y_h) \|\nabla v\|_{L^2}.
\end{equation}

\begin{remark}
\label{rmk: complex geometry}
We have to note that though the estimate of the modeling residual $\eta_\mo$ is probably the most straight forward one among the three, it consumes the most computational cost in the adaptive computation. It is easily observe from \eqref{eqn:etamo} that to compute $\eta_\mo$ we need to find $|T'\bigcap T|$ for every $T\in\Ta$ and $T'\in \T_h$. Thus the complex geometry, which is illustrated soon in Figure \ref{figs:TcapTh}, enters into the computation when we assign the global error estimator into local contributions in each adaptive step. We give an estimate of the computational cost of the modeling residual in \S~\ref{sec:local} and from then on, we will discuss how to deal with this inefficiency that eventually reduces the computational cost by one order with respect to the number of degrees of freedom.
%
%
%
\end{remark}

Combining the residual estimates in \eqref{eqn:etatrc}, \eqref{eqn:etacg} and \eqref{eqn:etamo}, the stability results in \cite[Section 3.2]{wang2018posteriori} and a quantitative version of the inverse function theorem \cite[Lemma B.1]{LuOr:acta}, we can obtain the following theorem for the {\it rigorous global a posteriori existence and error estimate} \cite[Theorem 3.7]{wang2018posteriori}. 



\begin{theorem}
\label{thm:h1error}
Let $y_h$ be the a/c solution defined in \eqref{eqn:firstvariationea}, $\eta(y_h) = \eta_{\trc}(y_h)+\eta_\mo(y_h)+\eta_\cg(y_h)$ be the total residual estimator, $ \gamma(y_h)$ be the stability constant defined in \cite[\S~3.2.4]{wang2018posteriori}, and $L_1$ be the Lipschitz constant of $\ddel \Ea$ whose definition can be found in \cite[Lemma 3.9]{LuOr:acta}. Under the assumption that $\gamma(y_h)>0$ and $2L_1\eta(y_h)< \gamma(y_h)^2$, there exists a unique $y$ satisfying $y - y^B \in \Use$ which solves the atomistic variational problem \eqref{eqn:firstvariationea}, and satisfies the following {\it a posteriori} error bound, 
\begin{equation}
	\|\nabla \Ia y_h - \nabla y\|_{L^2} \equiv \|\nabla \Ia u_h - \nabla u\|_{L^2}\leq 2\frac{\eta(y_h)}{\gamma(y_h)}.
\end{equation}

\end{theorem}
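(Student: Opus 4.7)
The plan is to invoke the quantitative inverse function theorem (Lemma B.1 of \cite{LuOr:acta}) applied to the nonlinear map $F:\Use\to(\Use)^*$ defined by $\langle F(u),v\rangle := \langle\delta\Ea(y^B+u),v\rangle$, at the reference point $\tilde u_h := \Ia y_h - y^B \in \Use$. A zero of $F$ corresponds precisely to a solution $y = y^B + u$ of the atomistic Euler--Lagrange equation in \eqref{eqn:firstvariationea}, so the distance from $\tilde u_h$ to the nearest root of $F$, measured in the energy semi-norm, is exactly the a posteriori error we wish to bound.

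First I would package the three residual estimates \eqref{eqn:etatrc}, \eqref{eqn:etacg} and \eqref{eqn:etamo} into a single dual-norm bound on $F(\tilde u_h)$. Since $\langle F(\tilde u_h),v\rangle = \mR[v]$, and the splitting \eqref{eq: residual_separation} decomposes $\mR$ into modelling, coarsening and truncation contributions, the triangle inequality yields
\begin{equation*}
  |\langle F(\tilde u_h),v\rangle| \;\leq\; |\mR_\mo|+|\mR_\cg|+|\mR_\trc| \;\leq\; \bigl(\eta_\mo(y_h)+\eta_\cg(y_h)+\eta_\trc(y_h)\bigr)\,\|\nabla v\|_{L^2}
\end{equation*}
for every $v\in\Use$, i.e.\ $\|F(\tilde u_h)\|_{(\Use)^*} \leq \eta(y_h)$.

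Next I would translate the two hypotheses into the standard ingredients of a Newton--Kantorovich setup. The assumption $\gamma(y_h)>0$ gives coercivity of $\delta F(\tilde u_h) = \delta^2\Ea(\Ia y_h)$; Lax--Milgram on the Hilbert space $(\Use,\|\nabla\cdot\|_{L^2})$ then yields $\|\delta F(\tilde u_h)^{-1}\|_{(\Use)^*\to\Use} \leq 1/\gamma(y_h)$. The global Lipschitz constant $L_1$ for $\delta^2\Ea$ controls the variation of $\delta F$ on a neighbourhood of $\tilde u_h$. The smallness hypothesis of the quantitative IFT then reads $2 L_1 \cdot (1/\gamma(y_h))^2 \cdot \eta(y_h) < 1$, which is exactly $2L_1\eta(y_h) < \gamma(y_h)^2$. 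Its conclusion produces a unique $u^\ast\in\Use$ with $F(u^\ast) = 0$ and $\|\nabla(u^\ast - \tilde u_h)\|_{L^2} \leq 2\eta(y_h)/\gamma(y_h)$. Setting $y := y^B + u^\ast$ closes the argument.

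In my view the real obstacle is not this final assembly, which is mechanical once the ingredients are present, but rather the upstream construction of the three estimates $\eta_\mo$, $\eta_\cg$ and $\eta_\trc$ as dual-norm bounds tested against an \emph{arbitrary continuous} $v\in\Use$, even though the a/c problem only delivers Galerkin orthogonality on the discrete space $\Us_h$. As foreshadowed in Remark \ref{rmk: complex geometry} and in \S~\ref{sec:error:aposterior}, bridging this gap is what forces the truncation operator $T_R$, the modified Cl\'ement interpolant $\Cs_h$, and the nonconforming matching between the atomistic triangulation $\Ta$ and the a/c mesh $\Th$ together with a divergence-free stress-tensor correction; these are the genuinely technical ingredients, and all of them are treated in detail in \cite{wang2018posteriori}, which the present theorem takes for granted.
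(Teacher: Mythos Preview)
Your proposal is correct and matches the paper's approach exactly: the paper does not give a detailed proof but simply states that the result follows by ``combining the residual estimates \eqref{eqn:etatrc}, \eqref{eqn:etacg} and \eqref{eqn:etamo}, the stability results in \cite[Section 3.2]{wang2018posteriori} and a quantitative version of the inverse function theorem \cite[Lemma B.1]{LuOr:acta}'', citing \cite[Theorem 3.7]{wang2018posteriori} for the full argument. Your write-up supplies precisely this assembly, and your closing paragraph correctly identifies that the substantive work lies in the upstream construction of the three residual bounds rather than in this final inverse-function-theorem step.
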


\subsection{Local contribution and analysis of the modeling and the coarsening residual estimators}
\label{sec:local}

\def\sjump{\llbracket\s^{\ac}\rrbracket}

The {\it a posteriori} error estimates we just proposed is global. In order to design and implement the adaptive algorithm, we need to assign the global residual estimator $\eta(y_h)$ into elementwise local contributions except for the truncation part. The truncation residual estimator is often computed directly in practice since it is relatively small if the computational domain $\Omega$ is sufficiently large. We then check whether it exceeds the threshold during the main adaptive process. See Algorithm~\ref{alg:size} in \S~\ref{sec: local_estimator_and_adaptiv_algorithm} for detail.



Following standard adaptive finite element procedure, the local coarsening residual estimator for an element $T\in\T_h$ can be defined as 
\begin{equation}\label{eq:localetaC}
\eta_{\cg}(y_h;T) := \Bigg( \sum_{f\in \Fh\bigcap T\in\T_h}\frac12(h_f\sjump_f)^2 \Bigg)^{\frac{1}{2}}.
\end{equation}

The local modeling residual estimator for an element $T\in\T_h$ is defined as the sum of the differences between stress tenors of $T$ and those of $T' \in\Ta$ satisfying $T\bigcap T'\neq \emptyset$ so that
\begin{equation}
  \eta_{\mo}(y_h;T) = \Bigg(\sum_{T'\in \Ta, T'\bigcap T\neq \emptyset} \eta_{\mo}(y_h; T', T)\Bigg)^{\frac{1}{2}}, \quad \forall T\in \T_h,
  \label{eq:localetam}
\end{equation}  
where
\begin{displaymath}
\eta_{\mo}(y_h;T', T):= |T'\bigcap T|\Bigg(\sa(\Ia y_h, T') - \sum_{T\in \T_h, T'\bigcap T\neq \emptyset}\frac{|T\bigcap T'|}{|T'|}(\sh(y_h, T))\Bigg)^2. 
\end{displaymath}
We note that the global modeling residual estimator $\eta_{\mo}(y_h)$ defined in \eqref{eqn:etamo} is summed over $\T_{\rm a}$ for the simplicity of presentation while the elementwise modeling residual estimator is defined on each $T \in \T_h$ for the purpose of adaptive mesh refinements, and the summation of $\eta_{\mo}(T)$ over $T \in \T_h$ and $\eta_{\mo}(y_h)$ are essentially equal up to a constant $C^{\rm tr}$. We omit the constants $C^{\rm tr}$ and $C_{\T_h}$ here only for the sake of presentation and will determine the values of these constants empirically in our adaptive computations (cf. \S~\ref{sec: local_estimator_and_adaptiv_algorithm}).

As we have already pointed out in Remark \ref{rmk: complex geometry}, evaluating $\eta_{\mo}(T')$ requires computing $|T\cap T'|$ for every $T \in \Ta$ and $T'\in\T_h$ where finding the geometric relationship between $T$ and $T'$ is essential. See Figure \ref{figs:TcapTh} for an illustration of the mismatch of $T \in \Ta$ and $T'\in\T_h$. We note that the computational cost of evaluating $\eta_{\mo}$ is proportional to the number of atoms in the continuum region since in our implementation we loop over all the elements in $\T_{\a}$ lying in the continuum region to record the geometric relationship. The number of atoms in the continuum region is about the square of the number of elements in $\T_h$. Moreover, the ratio of the number of atoms in the atomistic region and the number of elements in $\T_h$ remains approximately a constant under certain assumptions (see \cite[Section 8.1]{LuOr:acta} for detail). Hence, if $N$ is the total number of degrees of freedom of the a/c coupling problem \eqref{eq:min_ac}, then the computational cost of evaluating $\eta_{\mo}(T')$ can be roughly estimated by $O(N^2)$.  In contrast, the cost of evaluating the coarsening residual $\eta_{\cg}(T)$ is proportional to the number of edges $f \in \F^{\rm c}_{h}$ of the elements in the continuum region, which makes it only of $O(N)$. 
We give a numerical verification of the computational cost in \S~\ref{sec:numerics} (cf. Figure \ref{fig:time_mcrack} for micro-crack, Figure \ref{fig:time_screw} for anti-plane screw dislocation and Figure \ref{fig:time_multivac} for multiple vacancies).


\begin{figure}[htb]
\begin{center}
	\includegraphics[scale=0.4]{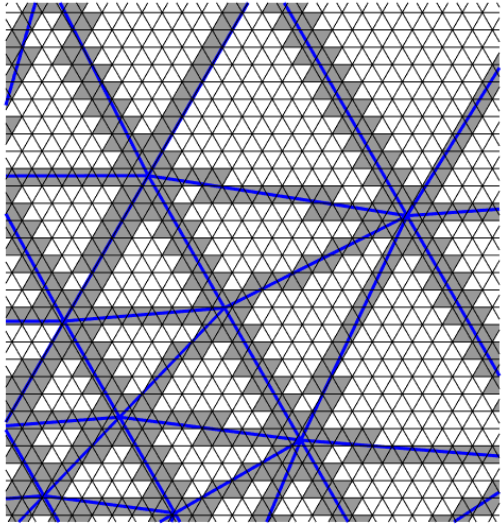}
	\caption{Illustration of $|T'\cap T|$. The edges of $T\in\T_h$ are marked with long blue solid lines whereas the elements $T' \in \T_\a$ at the boundaries of the elements $T\in\T_h$ are marked in grey.}
	\label{figs:TcapTh}
\end{center}
\end{figure}

To avoid the exceedingly high cost of computation, we apply an approximation of the elementwise residual estimator $\eta_{\mo}(y_h;T)$ in our adaptive computations. The idea of the approximation comes from a simple observation that the elementwise modeling residual estimator $\eta_{\mo}(y_h;T)$ should somehow be ``negligible" compared with the elementwise coarsening residual estimator $\eta_{\cg}(y_h;T)$. Such observation may be easily verified if we take the spring potential (essentially a linearized model) as an example. With the assumption that $u$ is relative smooth in the regions that are far-away from the defect core, we can directly estimate $\eta_{\cg}(y_h;T) \approx O(h_{T}^2)$ and $\eta_{\mo}(y_h;T) \approx O(h_{T})$ by some algebraic manipulations according to the definitions \eqref{eq:localetaC} and \eqref{eq:localetam}. Since $h_T \gg 1$ for any $T \in \T_{h}$ that is sufficiently far away from the defect core, we have a good reason to believe that $\eta_{\mo}(y_h;T)$ should be a quantity of higher order compared  with $\eta_{\cg}(y_h;T)$. For the rest of this section, we make this argument rigorous and numerically verify its validity.


We start with the {\it a priori} error estimates \eqref{eq:ap}. We can further split the modeling error by 
\begin{align}\label{eq:zeta}
\zeta_{\mo} = \|D^2u\|_{\ell^2(\mathcal{I}^{\rm ext})} + \|D^3u\|_{\ell^{2}(\L\cap\Omega^{\c})} + \|D^2u\|^2_{\ell^{4}(\L\cap\Omega^{\c})} =: \zeta_\mo^\i(y) + \zeta_\mo^\c(y),
\end{align}
where $\zeta_\mo^\i(y)$ and $\zeta_\mo^\c(y)$ are essentially the modeling error at the interface and in the continuum region. If we assume that the computational domain contains only a single atomistic region $\Omega^{\a}$ whose radius is denoted by $R_{\a}$, then the following theorem states that for $R_{\rm a}$ sufficiently large, $\zeta_\mo^\c(y)$ is negligible compared with $\zeta_\cg(y)$ and their elementwise counterparts obey a certain ratio relation.



%

\begin{theorem}\label{thm:main}
Suppose Assumption \ref{as:local} holds and the computational domain only contains a single atomistic region. Let $y$ be a strongly stable solution of \eqref{eq:min}, $u = y - y^{B}$, and $\zeta_\mo^\c(y)$ and $\zeta_\cg(y)$ be defined in \eqref{eq:zeta}. If the triangulation $\T_{h}$ is a graded mesh such that the mesh size function $h(x)$ for $x\in T\in \T_{h}$ satisfies
\begin{equation}\label{eq:h}
	|h(x)|\leq C^{\rm{mesh}}  \b(\frac{|x|}{R_\a}\b)^\beta, \quad  \text{  with  }1<\beta<\frac{d+2}{2},
\end{equation}
we then have
\begin{align}\label{eq:lima}
\lim_{R^{\a} \rightarrow \infty} \frac{\zeta^\c_{\mo}(y)}{\zeta_\cg(y)} = 0.
\end{align}
Moreover, for $T\in \T_{h}$, if we define the local error contributions by $\zeta_\cg(y; T):=\|h D^2u\|_{\ell^2(\Lambda_T)}$ and $\zeta^\c_{\mo}(y; T):=\|D^3u\|_{\ell^{2}(\Lambda_T)} + \|D^2u\|^2_{\ell^{4}(\Lambda_T)}$ respectively with $\Lambda_T:=\Lambda \cap T$. Then for $R_{\a}$ sufficiently large, there exits a constant $C^{a-priori}_{\mo-\cg}>0$ such that
\begin{equation}\label{eq:local}
    \frac{\zeta^\c_{\mo}(y; T)}{\zeta_\cg(y; T)} \leq \frac{C^{a-priori}_{\mo-\cg}}{h_{T}},
\end{equation}
where $h_T := \mathrm{diam}(T)$ for $T\in \T_{h}$.
\end{theorem}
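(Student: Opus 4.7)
The plan is to translate both $\zeta^\c_{\mo}(y)$ and $\zeta_{\cg}(y)$ into expressions controlled by the pointwise far-field decay of the corrector $u = y - y^B$, and then compare their asymptotic behaviour in $R_{\a}$. The main analytic input is the decay estimate for a strongly stable defect solution established in \cite{2013-defects, 2021-defectexpansion}: for every lattice site $\ell$ sufficiently far from the defect core one has
\begin{align*}
|D^j u(\ell)| \lesssim |\ell|^{-(d-1+j)}, \qquad j=1,2,3,
\end{align*}
(with possibly a logarithmic factor in the dislocation case, harmlessly absorbed). Throughout, I write $r_T := |x_T|$ for the distance from the centroid of $T$ to the defect.

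\textbf{Global ratio.} First I would bound each term on the right of \eqref{eq:zeta} by an integral approximation of the lattice sum on $\L\cap\Omega^\c \supseteq \L\setminus B_{R_\a}$. With the decay above, polar coordinates in $d=2$ give
\begin{align*}
\|D^2 u\|_{\ell^2(\L\cap\Omega^\c)} \lesssim R_\a^{-2}, \quad
\|D^3 u\|_{\ell^2(\L\cap\Omega^\c)} \lesssim R_\a^{-3}, \quad
\|D^2 u\|^2_{\ell^4(\L\cap\Omega^\c)} \lesssim R_\a^{-5}.
\end{align*}
For the coarsening term, substituting the mesh bound \eqref{eq:h} gives
\begin{align*}
\zeta_{\cg}(y)^2 \lesssim R_\a^{-2\beta}\int_{R_\a}^{\infty} r^{2\beta-5}\, dr,
\end{align*}
and the hypothesis $\beta < (d+2)/2 = 2$ is precisely what makes this integral convergent, yielding $\zeta_{\cg}(y) \lesssim R_\a^{-2}$. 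Combining the bounds gives $\zeta^\c_{\mo}(y)/\zeta_{\cg}(y) \lesssim R_\a^{-1} \to 0$, which is \eqref{eq:lima}.

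\textbf{Local ratio.} For \eqref{eq:local} I would localise the same computation to a single element $T\in\T_h$. Since $T$ lies in the continuum region, $r_T \gtrsim R_\a$; on the other hand $T$ contains $O(h_T^2)$ lattice sites on which $|D^j u|$ is essentially constant of magnitude $r_T^{-(1+j)}$, and $h(x) \sim h_T$ up to shape regularity. This gives
\begin{align*}
\zeta_{\cg}(y;T) \lesssim h_T^2\, r_T^{-3}, \qquad
\zeta^\c_{\mo}(y;T) \lesssim h_T\, r_T^{-4} + h_T\, r_T^{-6} \lesssim h_T\, r_T^{-4},
\end{align*}
so the ratio is bounded by $1/(h_T r_T) \leq C/(h_T R_\a) \leq C/h_T$ as soon as $R_\a \geq 1$, which yields \eqref{eq:local} with a constant $C^{a-priori}_{\mo-\cg}$ absorbing $1/R_\a$.

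\textbf{Main obstacle.} The chief subtlety is making the passage from discrete lattice sums to radial integrals fully rigorous under the graded mesh: $h(x)$ is only comparable to $h_T$ up to a shape-regularity constant on an element, and the count of $\L\cap T$ needs a constant depending on the background triangulation $\T_{\a}$; both must be tracked so that they do not absorb any $R_\a$-dependence. A secondary difficulty is that the decay bounds \cite{2013-defects, 2021-defectexpansion} are stated for the exact solution on the infinite lattice, so one must either invoke the a priori estimate \eqref{eq:ap} to transfer the bounds to the truncated solution, or use directly the pointwise decay of $u$ proved in those references and treat the dislocation logarithmic factor via a standard $R_\a^{-1+\eps}$ absorption.
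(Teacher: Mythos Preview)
Your approach is essentially the same as the paper's: both convert the lattice norms into radial integrals via the pointwise decay estimates of \cite{2013-defects}, obtain $\zeta_{\mo}^{\c}(y)\lesssim R_{\a}^{-d/2-2}$ and $\zeta_{\cg}(y)\lesssim R_{\a}^{-d/2-1}$ for point defects (and the analogous rates for dislocations), and then localise to a single element using that $|D^{j}u|$ is essentially constant of order $r_T^{-(d-1+j)}$ on $T$. The only remark is that, in both your argument and the paper's, bounding the \emph{ratio} $\zeta_{\mo}^{\c}/\zeta_{\cg}$ tacitly uses the decay as a two-sided estimate (you need a lower bound on $\zeta_{\cg}$, not just an upper bound); your phrase ``essentially constant of magnitude'' already signals this, but it is worth making explicit since the cited theorems are stated as upper bounds only.
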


\begin{proof}
Let $y$ be a strongly stable solution of \eqref{eq:min} and $u = y - y^{B}$. We have the following sharp decay estimates of $u$ for both point defects and dislocations \cite[Theorem 2.3, 3.6]{2013-defects}, for $|\ell|$ sufficiently large and $j=1,2,3$, 
\begin{align}\label{eq:ubar-decay}
\big|D^j u(\ell)\big| &\leq C \big(1+|\ell|\big)^{-d-j+1}, \qquad \quad  \textrm{point defects}, \nonumber \\
\big|D^j u(\ell)\big| &\leq C\big(1+|\ell|\big)^{-j-1}\log(|\ell|), \qquad \textrm{dislocations},
\end{align}
where the stencil norm is defined by $|D u(\ell)| := \big(\sum_{\rho\in\Rg_{\ell}}D_{\rho}u(\ell)\big)^{1/2}$. For ${\bm \rho}\in (\Rg_{\ell})^j$, $D_{\bm \rho}u:=D_{\rho_1}\cdots D_{\rho_j}u$ denotes a $j$-th order derivative, and $D^ju$ is defined recursively by $D^ju:=DD^{j-1}u$ which denotes the $j$-th order collection of derivatives.

By the definition of $\zeta_\mo^{\c}(y)$, $\zeta_\cg(y)$, together with the sharp decay estimates \eqref{eq:ubar-decay} and the restriction of the mesh size function $h(x)$ by \eqref{eq:h}, for $R_{\rm a}$ sufficiently large, which is equivalent to $|\ell|$ sufficiently large, we obtain for point defects,
\begin{align}
    \zeta_\mo^{\c}(y) &\leq C_{\zeta_\mo} \Big( \int_{R_{\a}}^{\infty} r^{d-1} (r^{-d-2})^2 \dr \Big)^{1/2} \leq C_{\zeta_\mo} (R_{\a})^{-d/2-2}, \nonumber \\
    \zeta_\cg(y) &\leq C_{\zeta_\cg} \Big( \int_{R_{\a}}^{\infty} r^{d-1} \Big(\frac{r}{R_{\a}}\Big)^{2\beta} (r^{-d-1})^2 \dr \Big)^{1/2} \leq C_{\zeta_\cg} (R_{\a})^{-d/2-1}. 
\end{align}
For dislocations, similarly we have the estimates \cite[Proposition 2]{2013-defects} 
\begin{align}
    \zeta_\mo^{\c}(y) \leq C'_{\zeta_\mo}  (R_{\a})^{-2} \quad \text{and} \quad
    \zeta_\cg(y) \leq C'_{\zeta_\cg} (R_{\a})^{-1}.
\end{align}
Hence, we have 
\[
\lim_{R^{\a} \rightarrow \infty} \frac{\zeta^{\c}_{\mo}(y)}{\zeta_\cg(y)} = \lim_{R^{\a} \rightarrow \infty} C^{a-priori}_{\mo-\cg} R_{\a}^{-1} = 0,
\]
with $C^{a-priori}_{\mo-\cg}:=\max\{C_{\zeta_\mo} /C_{\zeta_\cg}, C'_{\zeta_\mo} /C'_{\zeta_\cg}\}$, which yields the stated result \eqref{eq:lima} for both point defects and dislocations.

Let $\T_{\a}$ be the {\it canonical} triangulation induced by $\L$. Given $T\in \T^\c_{h}$, we define the set of ``inner finest elements" by $
\T_{\varepsilon}^{T}:= \{T_{\varepsilon} \in \T_{\a}~\big|~ |T_{\varepsilon} \cap T| = |T_{\varepsilon}|\}
$. 

To obtain the estimate \eqref{eq:local}, we first have for any $T\in \T^\c_{h}$ and $T_{\varepsilon} \in \T^T_{\varepsilon}$
\begin{equation}\label{eq:localest}
    \frac{\zeta^{\c}_{\mo}(y; T)}{\zeta_\cg(y; T)} \leq C \frac{\|D^3 u\|_{\ell^2(\Lambda_T)}}{\|h D^2u\|_{\ell^2(\Lambda_{T})}} \leq C^{a-priori}_{\mo-\cg} h^{-1}_T \frac{\|D^3 u\|_{\ell^2(\Lambda_{T_\varepsilon})}}{\| D^2u\|_{\ell^2(\Lambda_{T_\varepsilon})}}.
\end{equation}
Denoting the barycenter of $T_{\varepsilon}$ by $x_{T_\varepsilon}$ and applying the decay estimates \eqref{eq:ubar-decay} again, we arrive at
\begin{eqnarray}\label{eq:localepsilon}
    \frac{\zeta^{\c}_{\mo}(y; T)}{\zeta_\cg(y; T)} \leq C^{a-priori}_{\mo-\cg}h^{-1}_{T} \frac{(x_{T_\varepsilon}+1)^{-3} - x_{T_\varepsilon}^{-3}}{(x_{T_\varepsilon}+1)^{-2} - x_{T_\varepsilon}^{-2}} \leq \frac{C^{a-priori}_{\mo-\cg}}{h_T},
\end{eqnarray}
where in the last inequality we use the fact $x_{T_\varepsilon}>1$. Combining \eqref{eq:localest} and \eqref{eq:localepsilon} we achieve the estimate~\eqref{eq:local} which finishes the proof of Theorem \ref{thm:main}.
\end{proof}


We then give a similar {\it a posteriori} estimate under suitable assumptions which is computable and can be employed in the adaptive computations. We note that similar to $\zeta^\c_{\mo}(y; T)$, $\eta^\c_\mo(y_h; T)$ represents the elementwise residual in the continuum region.

\begin{corollary}\label{coro:ap}
Under the assumptions of Theorem \ref{thm:main}, if $y_h$ is a strongly stable solution of \eqref{eq:min_ac} satisfying \eqref{eq:ap} and we additionally assume that $u_h$ satisfies the decay estimates \eqref{eq:ubar-decay}, then for $T\in \T_{h}$, there exists constants $C^{\rm a-post}$ and $C^{\rm a-post}_{\rm mo-cg}$ such that
\begin{equation}\label{claim}
    \frac{\eta^\c_\mo(y_h; T)}{\eta_\cg(y_h; T)} \leq C^{\rm a-post} \frac{\zeta^\c_\mo(y; T)}{\zeta_\cg(y; T)} \leq \frac{C^{\rm a-post}_{\rm mo-cg}}{h_T}.
\end{equation}
\end{corollary}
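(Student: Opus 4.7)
The plan is to split the claimed chain of inequalities into two halves and treat them separately. The second inequality $\zeta^\c_\mo(y;T)/\zeta_\cg(y;T) \le C^{\rm a-post}_{\rm mo-cg}/h_T$ is exactly the local estimate \eqref{eq:local} from Theorem \ref{thm:main} and follows immediately; no additional work is needed for this half, and the assumption on the decay of $u_h$ is not used here.

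For the first inequality $\eta^\c_\mo(y_h;T)/\eta_\cg(y_h;T) \le C^{\rm a-post}\,\zeta^\c_\mo(y;T)/\zeta_\cg(y;T)$, my strategy is to connect each a posteriori indicator to its a priori counterpart evaluated at the computed solution $u_h$, and then exchange $u_h$ for $u$ using \eqref{eq:ap}. First, for the coarsening indicator I would exploit that on any $T\in\T_h$ far from the interface, $\sh(y_h;T)$ coincides with the pure Cauchy--Born stress $\sigc(y_h;T)=\pp W(\nabla_T u_h)$. Since $u_h$ is piecewise affine on $\T_h$, a first-order Taylor expansion of $\pp W$ across each interior edge $f\in\Fh^\c$ gives $\llbracket \sh \rrbracket_f \approx \pp^2 W(\nabla u_h)\,\llbracket \nabla u_h\rrbracket_f$, and the jump of $\nabla u_h$ across $f$ is a discrete second difference of $u_h$ of size $\sim h_T|D^2 u_h|$. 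Multiplying by $h_f$, squaring and summing then produces a two-sided equivalence $\eta_\cg(y_h;T) \sim \|h D^2 u_h\|_{\ell^2(\L_T)} = \zeta_\cg(u_h;T)$.

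Second, for the modeling indicator on an element $T$ in the continuum region, the difference $\sa(I_\a y_h;T')-\sh(y_h;T)$ reduces to the Cauchy--Born modeling error on $u_h$. The GRAC23 consistency (exactness on affine deformations) kills the zeroth- and first-order terms in the Taylor expansions of $V$ and $W$, and the point symmetries $\Rg=-\Rg$ and $V(\{-g_{-\rho}\})=V(g)$ remove the remaining symmetric quadratic contributions. The leading surviving terms are therefore of the form $|D^2 u_h|^2+|D^3 u_h|$ on each local atomistic patch, so summing over $T'\in\Ta$ with $T'\cap T \ne \emptyset$ yields $\eta^\c_\mo(y_h;T) \lesssim \|D^3 u_h\|_{\ell^2(\L_T)}+\|D^2 u_h\|^2_{\ell^4(\L_T)} = \zeta^\c_\mo(u_h;T)$. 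Third, by \eqref{eq:ap} together with the assumed decay of $u_h$, the relevant $\ell^p$ norms of $D^j u_h$ on $\L_T$ are comparable to those of $D^j u$ up to a constant, which lets me replace $u_h$ by $u$ in both indicators and close the argument, with $C^{\rm a-post}_{\rm mo-cg} = C^{\rm a-post}\cdot C^{a-priori}_{\mo-\cg}$.

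The main obstacle I expect is the second step. The difference $\sa-\sh$ carries the weights $|T'\cap T|/|T'|$ coming from the geometric mismatch between $\Ta$ and $\T_h$ (cf.\ Figure \ref{figs:TcapTh}), and the Taylor expansions of $V$ and $W$ must be organised carefully so that all $O(1)$ and first-order terms cancel by consistency, the symmetric quadratic terms cancel by point symmetry, and only $|D^2 u_h|^2 + |D^3 u_h|$ survives after weighted summation. Once this bookkeeping is in place, Step~1 is a standard finite-element stress-jump computation and Step~3 is a routine application of the a priori bound together with the decay assumption.
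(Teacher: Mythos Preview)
Your proposal is correct and follows essentially the same approach as the paper: both arguments obtain the second inequality directly from Theorem~\ref{thm:main}, and for the first inequality both connect the stress-based indicators $\eta^\c_\mo,\eta_\cg$ to the finite-difference quantities $\zeta^\c_\mo,\zeta_\cg$ via the \textit{a priori} consistency analysis (the Taylor expansions and point-symmetry cancellations you describe are precisely the content of \cite[\S 5]{PRE-ac.2dcorners} that the paper invokes), and then pass from $u_h$ to $u$ using the assumed decay of $u_h$, arriving at $C^{\rm a\text{-}post}_{\rm mo\text{-}cg}=C^{\rm a\text{-}post}C^{a\text{-}priori}_{\mo-\cg}$. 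Your write-up is in fact more explicit than the paper's, which only gives a short sketch; the obstacle you flag (bookkeeping for the weighted geometric mismatch $|T'\cap T|/|T'|$) is real but is exactly the computation carried out in the cited \textit{a priori} consistency proof.
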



\begin{proof}
We sketch out the proof here for the sake of simplicity. If $u$ and $u_h$ are the strongly stable solutions of \eqref{eq:min} and \eqref{eq:min_ac} respectively satisfying \eqref{eq:ap}, for $R_\a$  sufficiently large and $T$ lying in the region that far away from the defect core, we can roughly (abuse of notations) estimate that $\sigma^\a(u;T)/\sigma^\ac(u;T) \lesssim  \sigma^\a(u_h;T)/\sigma^\ac(u_h;T)$. Then, following the {\it a priori} consistency estimates presented in \cite[Section 5]{PRE-ac.2dcorners} and the {\it a posterior} residual estimates given in Theorem \ref{thm:h1error}, we have an asymptotic result that $\eta^{\c}_{\mo}(y_h; T)/\eta_\cg(y_h; T) \leq C^{\rm a-post} \zeta^{\c}_{\mo}(y; T)/\zeta_\cg(y; T)$ under an additional assumption of $u_h$ satisfying the same decay estimates as $u$. According to the estimate of $ \zeta^{\c}_{\mo}(y; T)/\zeta_\cg(y; T)$ in Theorem~\ref{thm:main}, we obtain the stated result by choosing $C^{\rm a-post}_{\rm mo-cg}=C^{a-priori}_{\mo-\cg}C^{\rm a-post}$, where $C^{a-priori}_{\mo-\cg}$ is introduced in \eqref{eq:local}.
\end{proof}

We note that there are three key assumptions for the analytical result \eqref{claim} in Corollary \ref{coro:ap} to hold. The first assumption is that $u_h$ shares the same decay estimates as $u$ which results in the first inequality of \eqref{claim}. Such assumption is not easy to prove rigorously in principle and we refer the interested reader to the analysis in \cite[\S~5.1]{wang2020posteriori} in the context of QM/MM coupling methods. The second and the third assumptions are that only a single atomistic region $\Omega^{\a}$ exists and $R_{\a}$ is sufficiently large which result in the {\it a priori} decay estimate \eqref{eq:ubar-decay} and consequently the second inequality of \eqref{claim} (or essentially \eqref{eq:local}). The second assumption is certainly violated by the defect of multiple vacancies in the first steps of the adaptive computation where multiple disjoint atomistic regions exist (see the first four subplots in Figure \ref{figs:evolution} in \S~\ref{sec:numerics:multiple vacancies}). The third assumption may not be satisfied by all three cases of defects in the first steps of the adaptive computations and the quantitative characterization of how large $R_\a$ should be in order to guarantee the decay estimates may involve substantial additional technicalities. 

Therefore, instead of pursuing the theoretical consideration further, we verify the first assumption and the estimate \eqref{claim} numerically in Figure \ref{fig:decay_rates_uh} and Figure \ref{figs:ratio_of_error_estimator} respectively whose further detail can be found later in \S~\ref{sec:numerics}.

\begin{figure}[htb]
	\centering 
	\subfloat[Micro-crack]{
		\label{fig:decay_mcrack}
		\includegraphics[height=4.0cm]{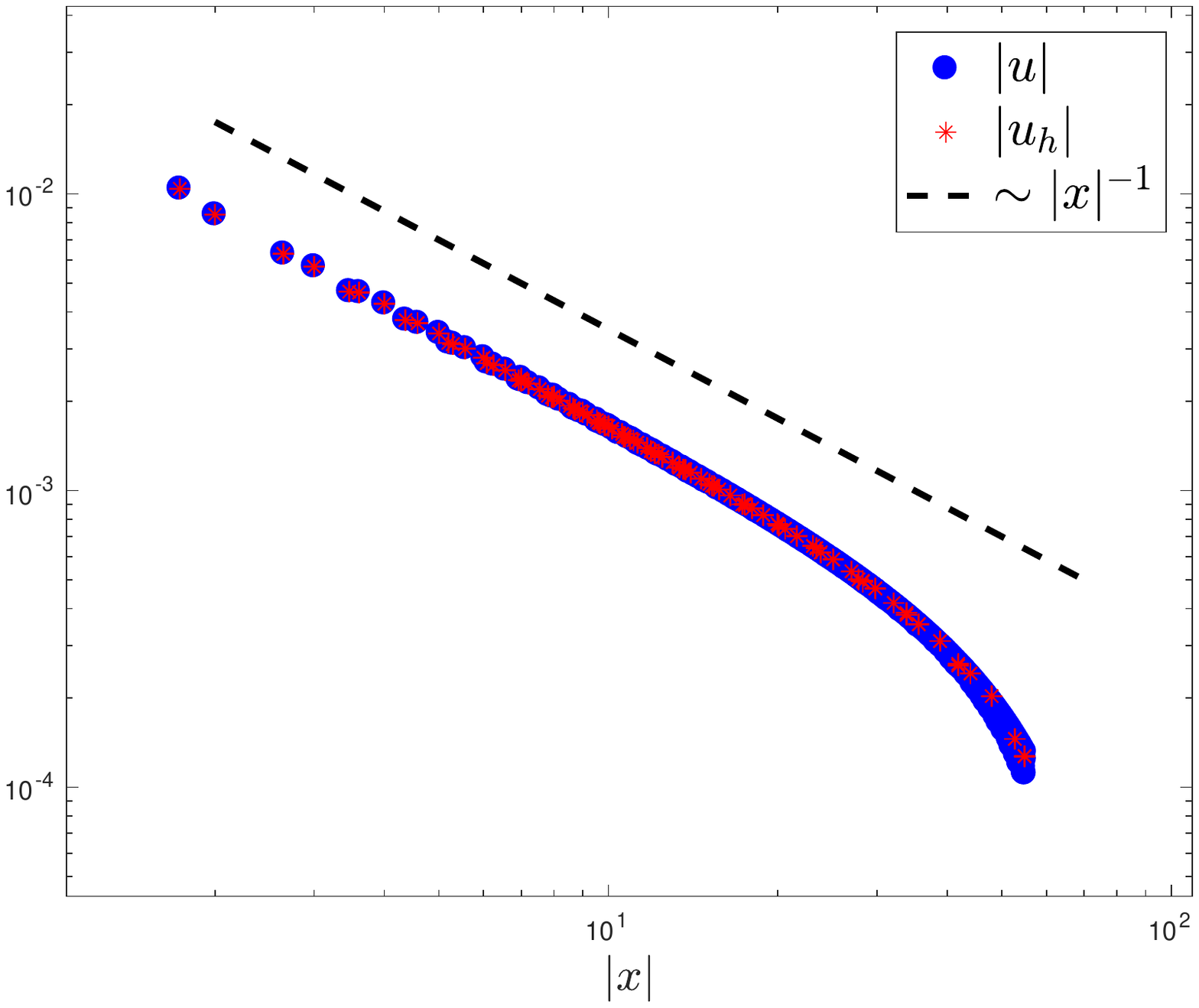}}
	\hspace{0.2cm} 
	\subfloat[Anti-plane screw dislocation]{
		\label{fig:decay_screw}
		\includegraphics[height=4.0cm]{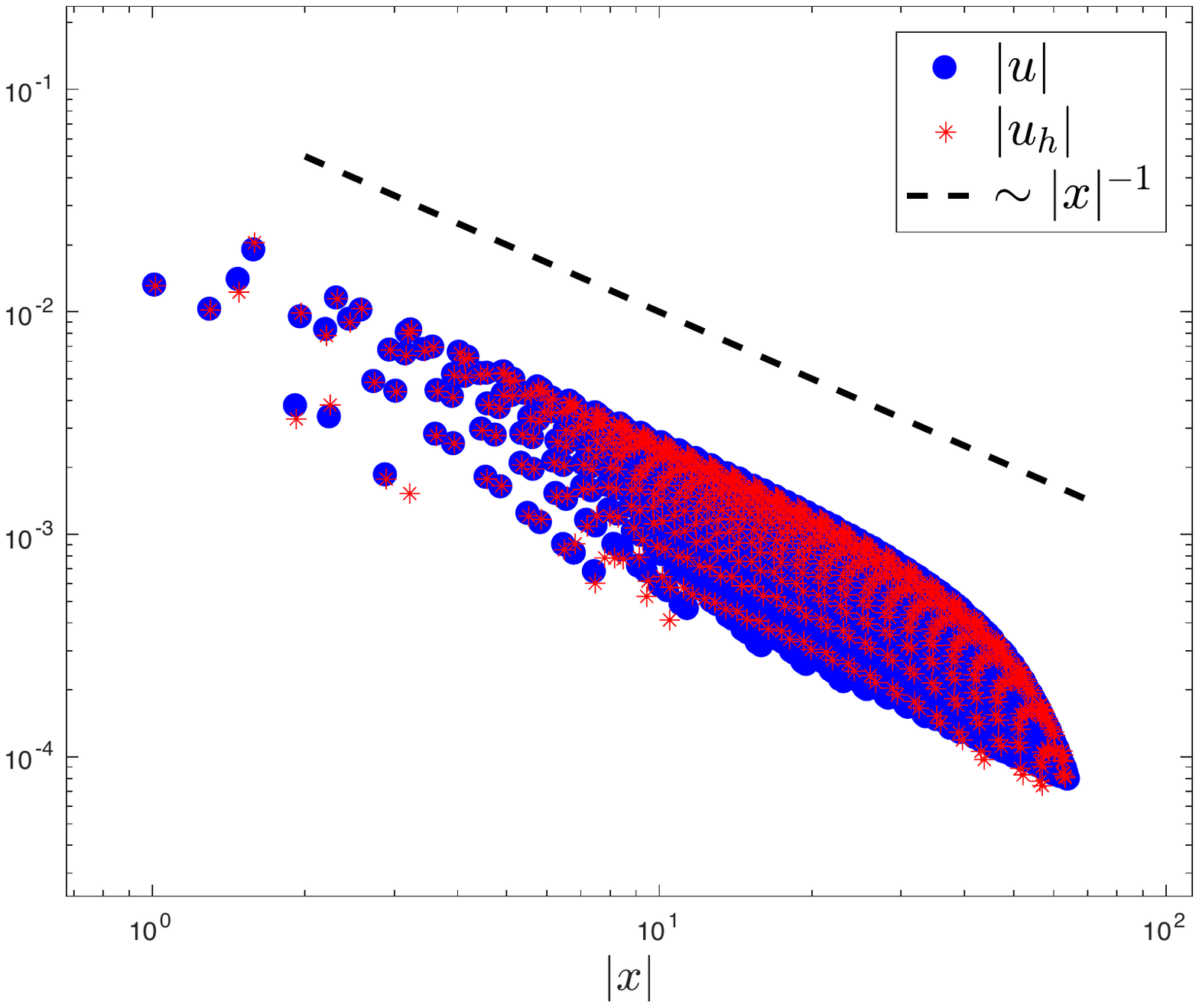}}
	\hspace{0.2cm} 
		\subfloat[Multiple vacancies]{
		\label{fig:decay_multivac} 
		\includegraphics[height=4.0cm]{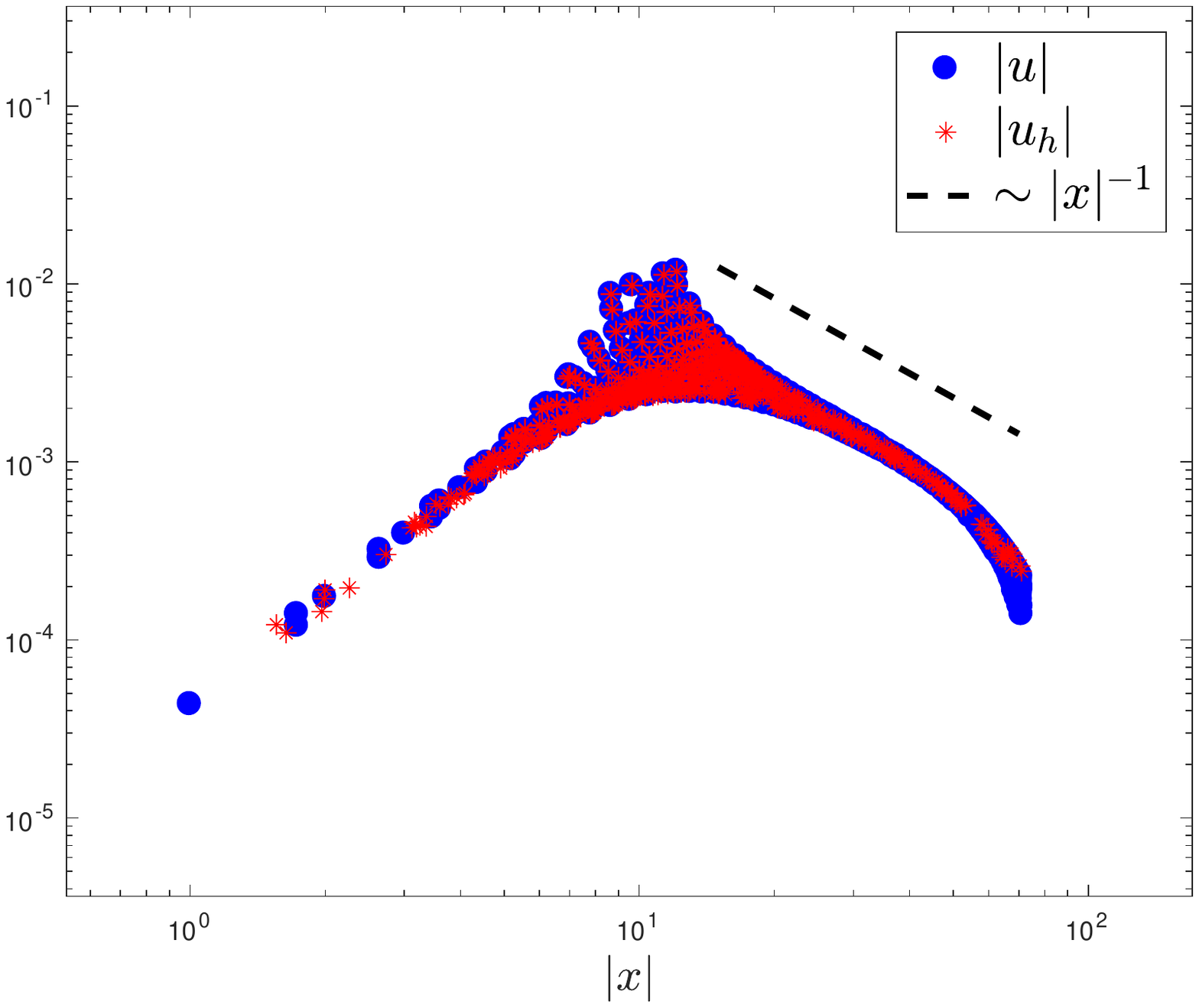}}
	\caption{Numerical verification of the decay rate of $u$ and $u_h$.}
	\label{fig:decay_rates_uh}
\end{figure}

The blue dots in each subplot in Figure \ref{fig:decay_rates_uh} are the modules of the atomistic solution $u$ sampled in six different directions in a spokewise form from the defect core $\Omega^\dfct$ while the red asterisks are taken from the a/c solution $u_h$ in one of the adaptive steps when $R_{\a}\ge 10$ for the first time for micro-crack and anti-plane screw dislocation and when $R_{\a}\ge 8$ (or more precisely, the radius of each atomistic region is greater than or equal to $8$) for the first time for multiple vacancies while the atomistic regions are still disjoint. The $x$-axis denotes the distance from the ``center" of the defects. We note that we define the ``center" of the defect core of the separated vacancies as their geometric center and an increase of $|u|$ and $|u_h|$ appears in the first half of the $x-$axis in the figure since it gets closer to the true defects as the distance $|x|$ grows initially. We observe in all three cases that $u_h$ essentially follows the same decay (or increase) rates as $u$ just as we assumed in Corollary \ref{coro:ap}. In addition, we observe a close match between $u$ and $u_h$ now but the detailed convergence of the error together with a thorough discussion is presented in \S~\ref{sec:numerics}. 


\begin{figure}[htb]
\centering
	\subfloat[Micro-crack]{
		\label{fig:ratio_mcrack}
		\includegraphics[height=4.0cm]{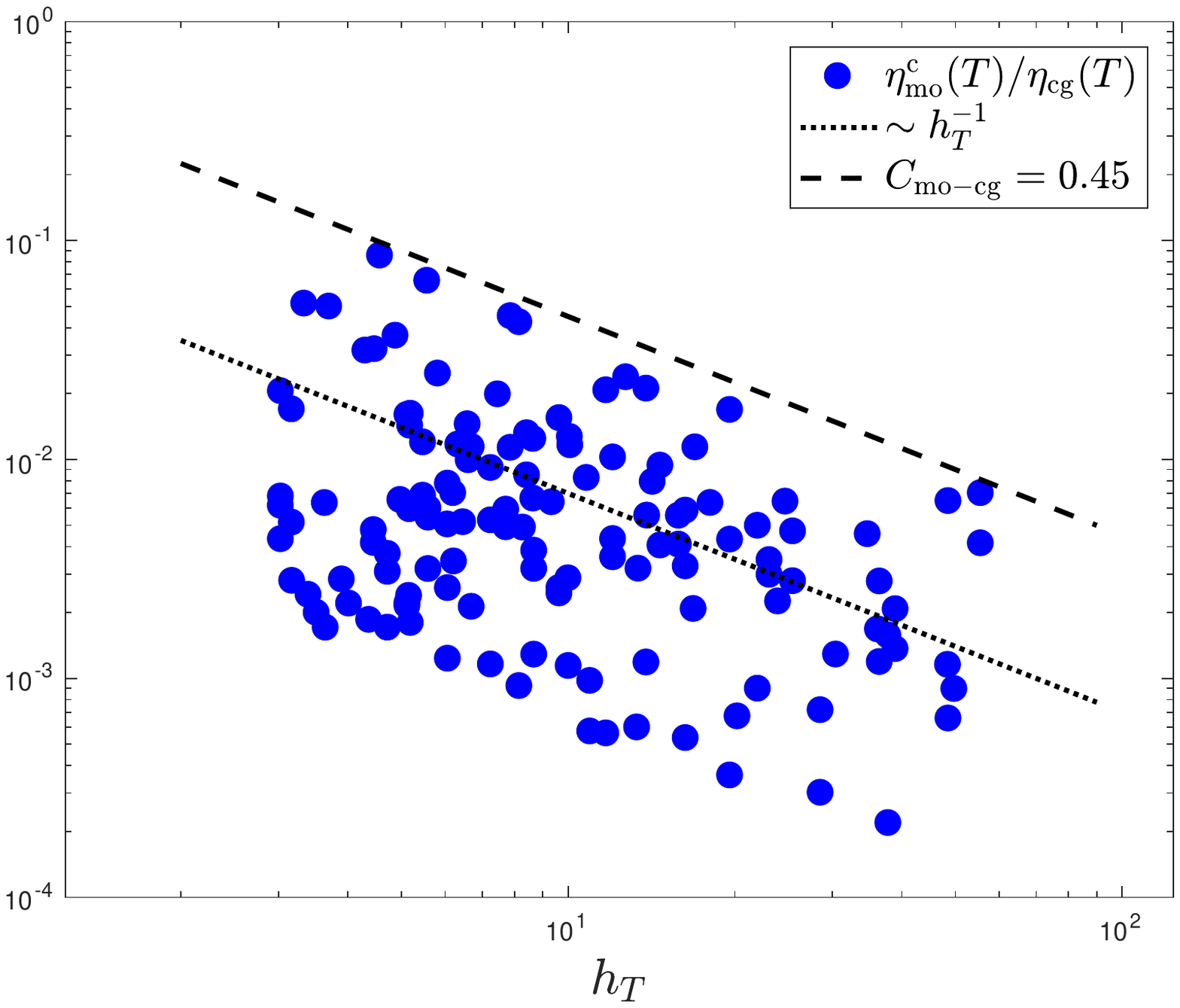}}
	\hspace{0.2cm} 
	\subfloat[Anti-plane screw dislocation]{
		\label{fig:ratio_screw}
		\includegraphics[height=4.0cm]{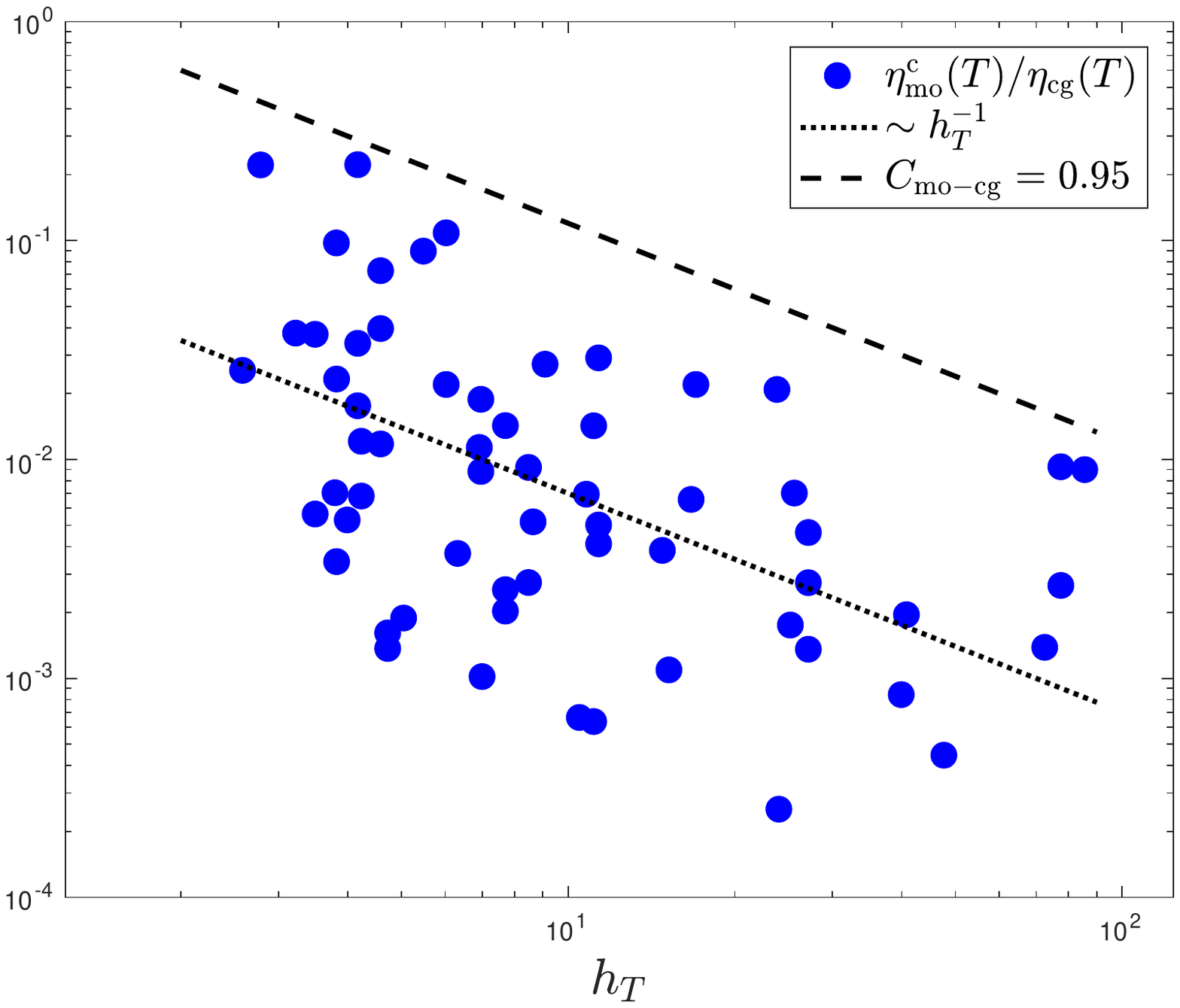}}
	\hspace{0.2cm} 
		\subfloat[Multiple vacancies]{
		\label{fig:ratio_multivac} 
		\includegraphics[height=4.0cm]{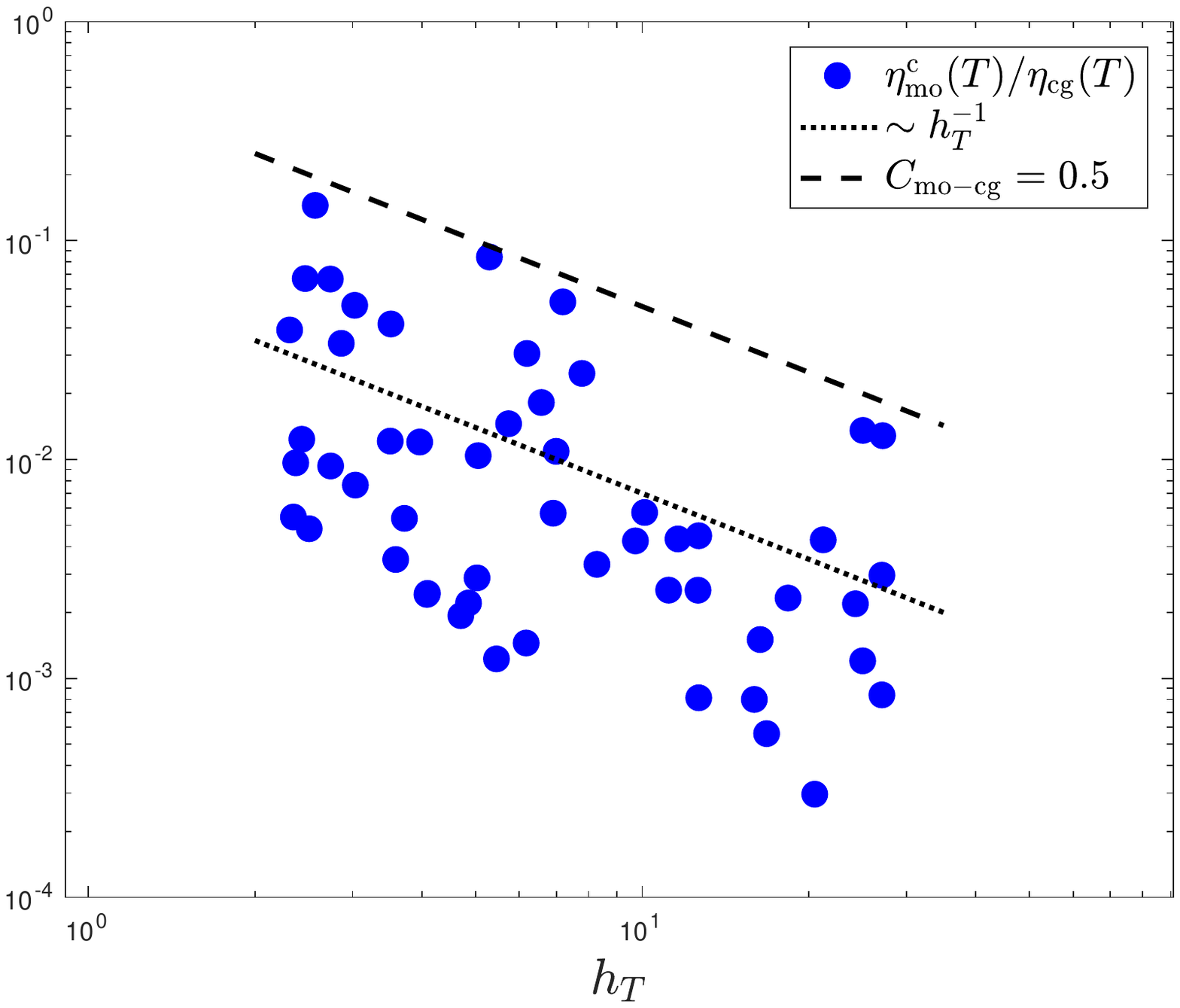}}
	\caption{Elementwise modeling error over elementwise coarsening error versus the diameter of corresponding element.}
	\label{figs:ratio_of_error_estimator}
\end{figure}

Figure \ref{figs:ratio_of_error_estimator} directly verifies the estimate in \eqref{claim} in Corollary \ref{coro:ap}. The three subplots are the snapshots at the same mesh as those in Figure \ref{fig:decay_rates_uh}. We clearly see that with the increasing diameters of the elements in the continuum region, the ratio of the elementwise modeling error over the elementwise coarsening error, namely $\eta^{\c}_{\mo}(y_h; T)/\eta_\cg(y_h; T)$, decays asymptotically with the rate $h^{-1}_T$. Though the exact value of $C^{\rm a-post}_{\rm mo-cg}$ is difficult to obtain, we use the largest {\rm mo-cg} ratio for the elements $T$ near $\Omega^\a \cup \Omega^{\rm buf}$ as the estimate of $C^{\rm a-post}_{\rm mo-cg}$ in practice, where $\Omega^{\rm buf}$ is the buffer region whose definition is given in \eqref{def: buffer_region}. The upper straight line validates our choice of this constant as all the points are under this line. 




\section{Modified Local Error Estimators and the Adaptive Algorithms}
\label{sec:eff_adapt_algo}

In this section, we propose several {\it modified} residual based local error estimators based on the theoretical results derived in the last section and design the algorithm for the {\it a posteriori} error control and adaptive computations for the system we consider. We note that our adaptive procedure is the classic one, namely
$$
\mbox{Solve}~\rightarrow~\mbox{Estimate}~\rightarrow~
\mbox{Mark}~\rightarrow~\mbox{Refine},
$$
and a common practice is to include the adaptive algorithm and the numerical results in one section. However, since the analysis in \S~\ref{sec:local} gives us plenty of possibilities to design various {\it modified} error estimators and mesh structures, we decide to separate out the elaboration of these estimators and mesh structures in this independent section so that the ideas behind are well understood.

\subsection{Stress tensor correction}
\label{sec:Interfacial-stress}
Before we go into the detail of the modified estimators and mesh structures, we first briefly review the stress tensor correction which is a preliminary step for specifying a correct {\it a posteriori} error estimator.


As we mentioned in \S~\ref{sec:formulation:stress}, the stress tensors $\sh$ and $\sa$, on which $\eta_\trc$, $\eta_\mo$, and $\eta_\cg$ all depend, are unique only up to a {\it divergence free} tensor field whose definition is given in \eqref{eq: divergence_free_condition}. Therefore, we need to minimize $\eta(y_h) = \eta_{\rm tr}(y_h)+\eta_{\rm mo}(y_h)+\eta_{\rm cg}(y_h)$ with respect to all the admissible stress tensors such that 
\begin{equation}
		 \< \del\Ea(\Ia y_h), v \> \leq  \min_{c_{\rm  a}\in N_1(\T_{\rm a})^2, c_h\in N_1(\T_h)^2}\tilde{\eta}\big(\sa(\Ia y_h)+\nabla c_{\rm a}\mJ, \sh(y_h)+\nabla c_h\mJ \big)\|\nabla v\|_{L^2},
		\label{eq:sharpresidual2}
\end{equation}
where $\mJ$ is the rotation matrix defined in \cite[\S~2.4.3]{wang2018posteriori}. Instead of minimizing the total error estimator $\eta$ with respect to $c_{\rm a}$ and $c_h$ introduced in \eqref{eq:sharpresidual2}, \cite{wang2018posteriori} introduced an approximation of the stress tensor correction which only requires minimizing the modeling error $\eta_\mo$ with respect to the degrees of freedom of $\sh$ adjacent to the interface. This method dramatically reduces the computational cost of such correction but greatly improves the accuracy of the estimator in the adaptive computation.

For the sake of completeness, we give the stress tensor correction algorithm which essentially follows from \cite[Algorithm 1]{wang2018posteriori}. 

\begin{algorithm}
\caption{Stress tensor correction}
\label{alg:etc}
\begin{enumerate}
\item Take $\sa(\Ia y_h)$ and $\sh(y_h)$ as the canonical forms in \eqref{eq:defn_Sa} and \eqref{eq:defn_Sh} respectively. 
\item Denote $q_f$ as the midpoint of $f\in \Fh$. $c_h$ minimizes the following sum
\begin{equation}
	\sum_{T\in\T^\i}|T|\left[\sa(\Ia y_h, T) - \big(\sh(\Ia y_h, T)+\nabla c_h \mJ\big)\right]^2
\end{equation} 
subject to the constrain such that $c_h(q_f)=0$, for $f\bigcap \L_\i = \emptyset$.
\item Let $\sh(y_h)  = \sh(y_h) + \nabla c_h\mJ$, compute $\eta_{\rm mo}$, $\eta_{\rm tr}$ and $\eta_{\rm cg}$ with $\sa(\Ia y_h)$ and $\sh(y_h)$.
\end{enumerate}
\end{algorithm}

\subsection{Modified local error estimators and the adaptive algorithm}
\label{sec: local_estimator_and_adaptiv_algorithm}
We are finally at the stage of proposing various modified local {\it a posteriori} error estimators and design our main adaptive algorithm. Before that, we should first recall the difficulty we face and the theoretical results in hand that may be able to deal with the difficulty so that the ideas behind the modified error estimators are easily understood.

Our main difficulty comes from the overwhelming cost of computing the elementwise modeling residual defined in \eqref{eq:localetam} which is due to the mismatch of $\T_{\a}$ and $\T_h$ in $\Omega^\c$ as illustrated in Figure \ref{figs:TcapTh}. We then prove in Theorem \ref{thm:main} and Corollary \ref{coro:ap} that, under suitable assumptions, the elementwise modeling residual is proportional to the elementwise coarsening residual by a factor of $O({h^{-1}_T})$ in the continuum region, where $h_T$ is the diameter of the element. Moreover, we numerically verify that this ratio relationship holds for the cases of defects we consider even without the stringent assumptions in the theorem and corollary. We may conclude that both the theoretical analysis and the numerical evidence show that, in principle, we are able to approximate the elementwise modeling residual by its coarsening counterpart. However, we should be conservative to such approximation around the atomistic-to-continuum interface, since a good number of researches in the analysis of the a/c coupling method have shown that the interface error may be large and subtly important for the construction of an efficient a/c mesh \cite{LuOr:acta, PRE-ac.2dcorners, wang2018posteriori}.


To justify our concern, we show in Figure \ref{figs:different_parts_error_estimator} the decay of the modeling residual and the coarsening residual given in \eqref{eqn:etamo} and \eqref{eqn:etacg} respectively and those in different regions, with respect to the increasing number of degrees of freedom. The computations use the {\it original} elementwise modeling residual and its coarsening counterpart defined in \eqref{eq:localetam} and \eqref{eq:localetaC} respectively. Figure \ref{figs:different_parts_error_estimator} shows that, somehow as we predicted, the overall modeling residual (represented by the red line with asterisk markers) is almost negligible (about two orders smaller) compared with the overall coarsening residual (represented by the dark blue line with round markers). However, the modeling residual (represented by the purple line with square markers) and its coarsening counterpart (represented by the light blue line with triangle markers) are comparable around the atomistic-to-continuum interface which affirms that the modeling residual may not be neglected or even be approximated at the interface. In addition, we clearly observe that the modeling residual concentrates at the interface which perfectly coincides with various earlier analysis of the interface influence of different coupling schemes \cite{LuOr:acta, LiOrShVK:2014, OrZh:2016}.  The numerical observations listed above essentially motivate us to propose the following constructions of the modified {\it a posteriori} error estimators and mesh structures. 



\begin{figure}[htb]
\centering
	\subfloat[Micro-crack]{
		\label{fig:diff_ratio_mcrack}
		\includegraphics[height=3.8cm]{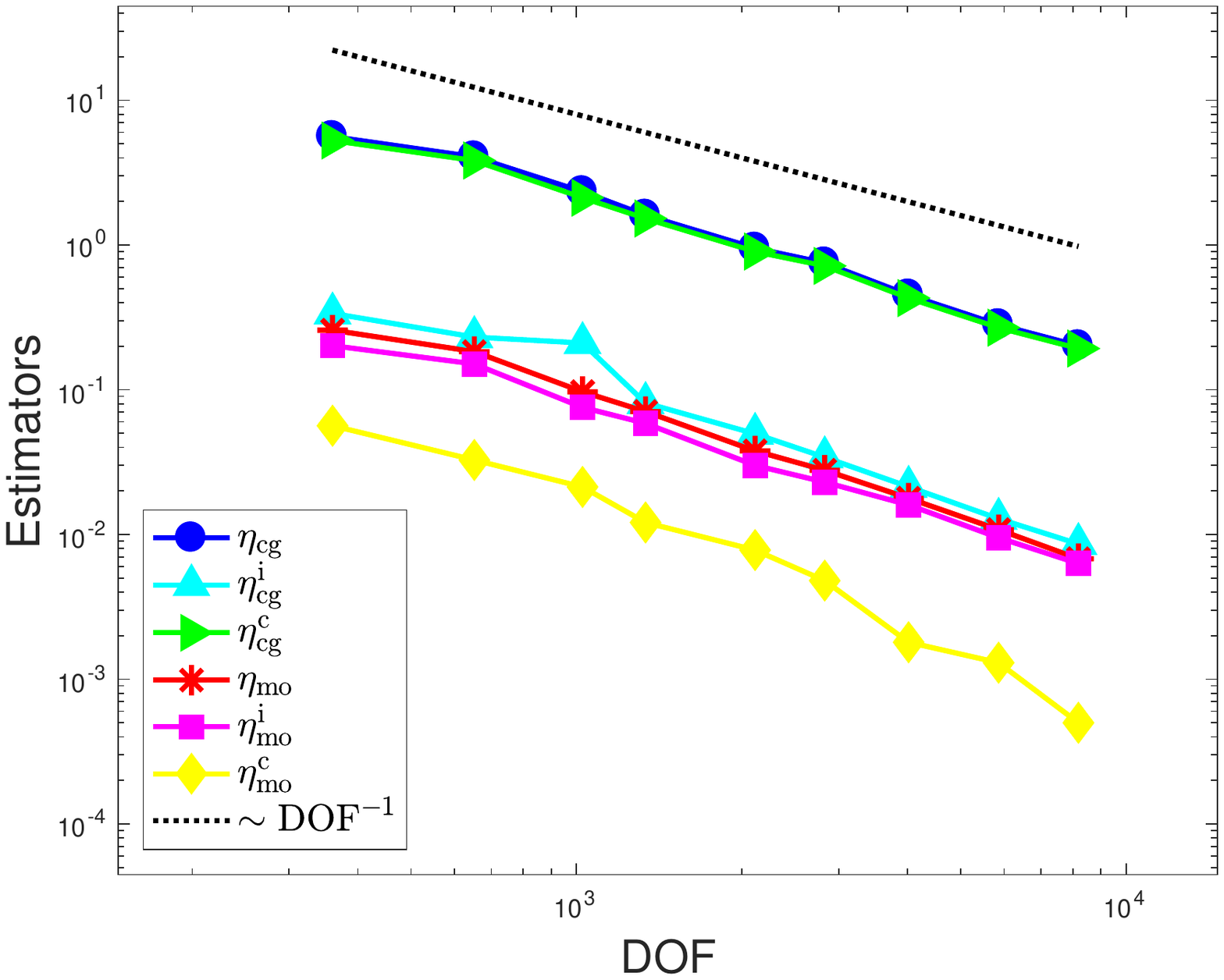}}
	\hspace{0.2cm} 
	\subfloat[Anti-plane screw dislocation]{
		\label{fig:diff_ratio_screw}
		\includegraphics[height=3.8cm]{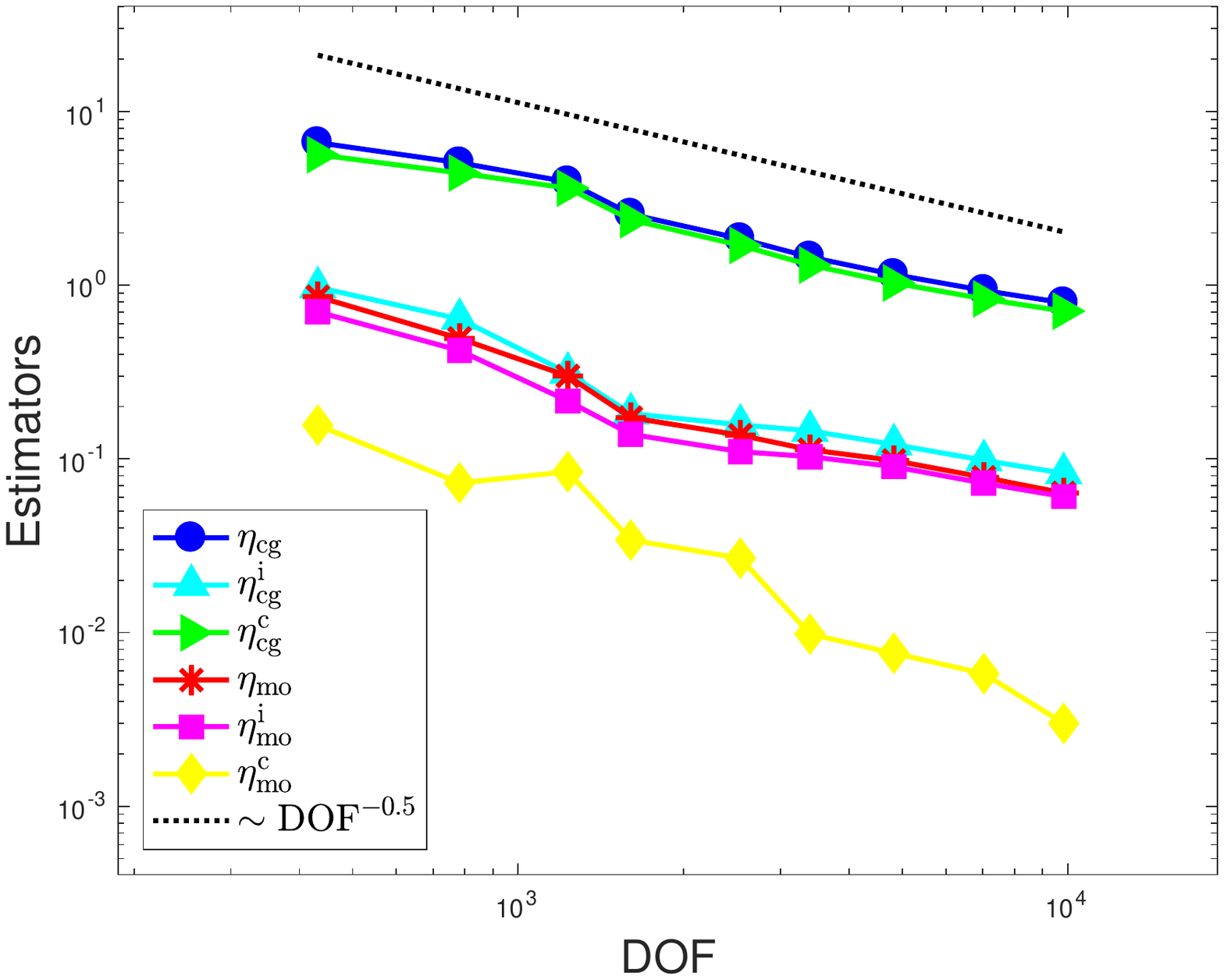}}
	\hspace{0.2cm} 
		\subfloat[Multiple vacancies]{
		\label{fig:diff_ratio_multivac} 
		\includegraphics[height=3.8cm]{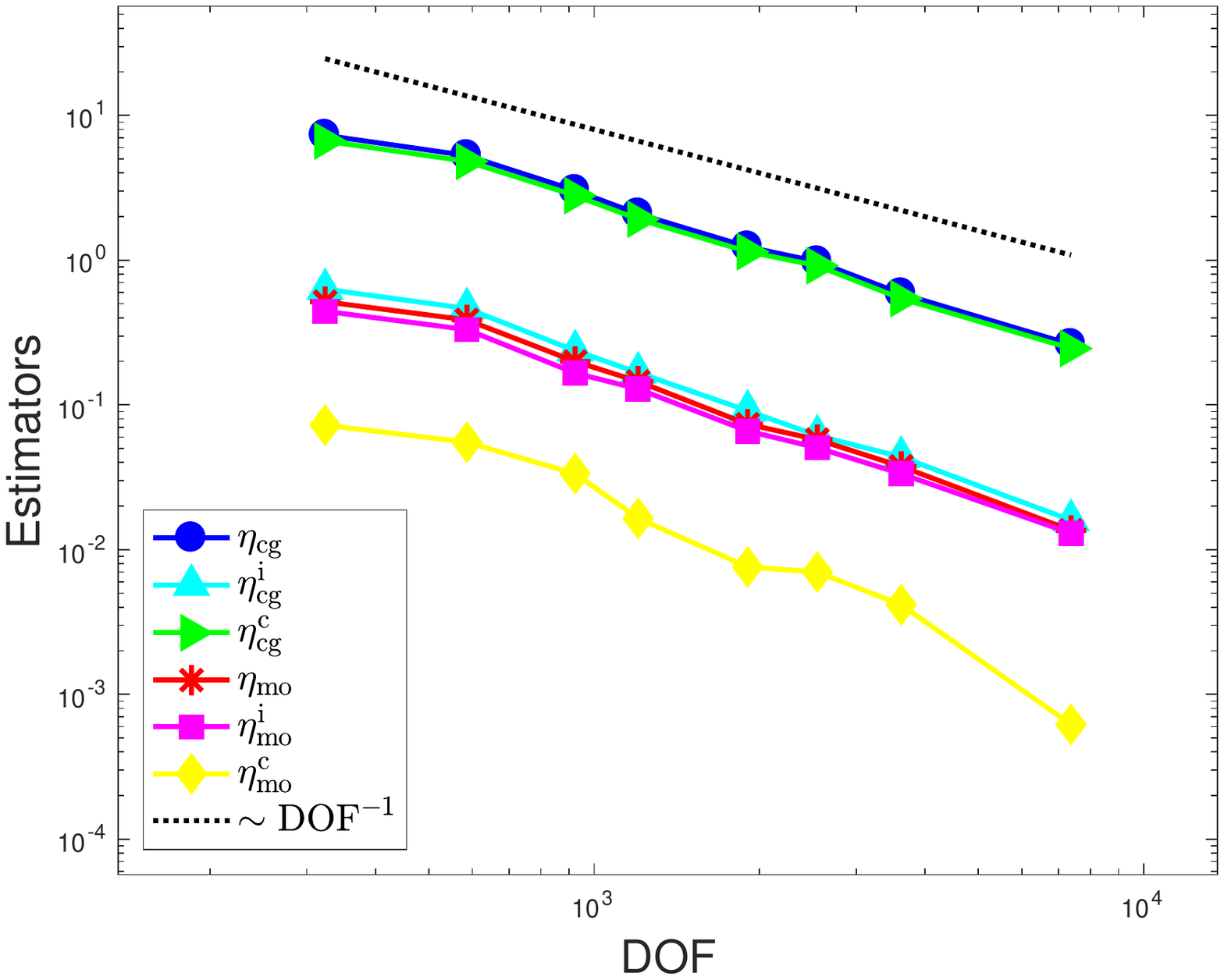}}
	\caption{The modeling residual and the coarsening residual in different regions.}
	\label{figs:different_parts_error_estimator}
\end{figure}

To keep the accuracy of the interface modeling residual while reducing the computational cost, we introduce an ``interface buffer" region $\Omega^{\rm buf}$ such that 
\begin{eqnarray}
\label{def: buffer_region}
\Omega^{\rm buf} := \bigcup_{T \in \T^{\rm buf}_{h}} T \qquad \textrm{with} \quad \T^{\rm buf}_{h}:=\{T \in \T^{\c}_{h} ~|~ \exists T' \in \T_{\a}~~\textrm{s.t.}~~T \cap T' = T'\}.
\end{eqnarray}
We denote $\L^{\rm buf}:= \L \cap \Omega^{\rm buf}$. See Figure \ref{figs:plotMesh} for an illustration of the structure of the mesh $\T_h$ and the ``interface buffer" region $\Omega^{\rm buf}$ (the grey region) for a single vacancy. 
\begin{figure}[htb]
\begin{center}
	\includegraphics[scale=0.45]{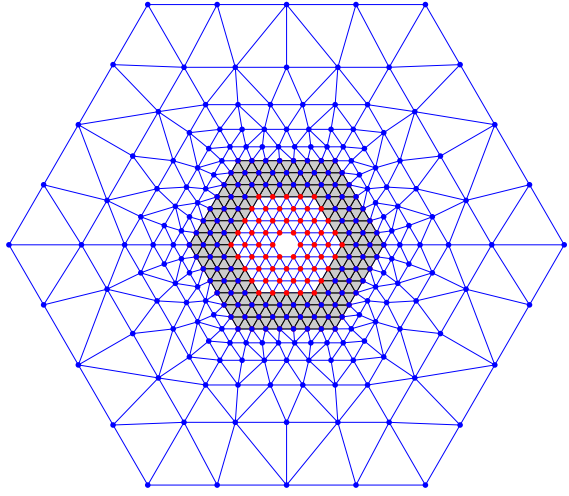}
	\caption{Illustration of ``interface buffer" region $\Omega^{\rm buf}$ which is the domain marked grey.}
	\label{figs:plotMesh}
\end{center}
\end{figure}

We note that the modeling residual can be computed exactly by \eqref{eq:localetam} in $\Omega^{\rm buf}$ without the need of finding the geometric relation of $\T_{\a}$ and $\T_h$ as they coincide in this region and thus the save of the computational cost. However, the length of $\Omega^{\rm buf}$, $R_{\rm buf}$ should be kept small since it will introduce unnecessary degrees of freedom otherwise.

%

\def\bld{{\rm bld}}

Outside $\Omega^{\rm buf}$, namely, the region $\Omega \setminus (\Omega^{\rm a} \cup \Omega^{\rm buf})$, we apply certain modification for the elementwise {\it a posteriori} error estimator. A smooth way to introduce the modification is to employ a blending function $\beta\in C^0(\R;\R)$ so that the {\it blended approximated} modeling residual $\tilde{\eta}_\mo(u_h; T)$ for $T \in \T_h^{\rm c}\setminus \T_h^{\rm buf}$ is defined by 
\begin{eqnarray}\label{eq:approx}
    \tilde{\eta}_\mo(u_h; T) := \beta(r^{\rm buf}_T)\cdot\eta_\mo(u_h; T) + \big(1-\beta(r^{\rm buf}_T)\big)\cdot \frac{C^{\rm a-post}_{\rm mo-cg}}{h_{T}}\eta_\cg(u_h; T),
\end{eqnarray}
where $r^{\rm buf}_{T} := {\rm dist}(T, \L^{\rm buf}) := \inf \{|\ell-x_T|, \forall \ell \in \L^{\rm buf}\}$ is the distance between $T$ and $\L^{\rm buf}$ and $x_T$ is the barycenter of $T$. $C^{\rm a-post}_{\rm mo-cg}$ is the constant defined in \eqref{claim} which is in practice determined empirically (cf. Figure \ref{figs:ratio_of_error_estimator}). Given the blending width $R_{\rm bld}>0$, a typical linear blending function $\beta\in C^0(\R;\R)$ is defined by
\begin{eqnarray}
\label{beta}
	\beta(r) :=
	\left\{ \begin{array}{ll}
		1, & r \geq R_{\rm bld}
		\\
		\frac{r-1}{R_{\rm bld}-1}, & 1 < r < R_{\rm bld}
		\\
		0, & r \leq 1 
	\end{array} \right. .
\end{eqnarray}

A more straightforward way for approximating the elementwise modeling error residual for $T \in \T^{\rm c}_h \setminus \T_h^{\rm buf}$ is to directly define 
\begin{eqnarray}\label{eq:direct_approx}
    \tilde{\eta}_\mo(u_h; T) :=  \frac{C^{\rm a-post}_{\rm mo-cg}}{h_{T}}\eta_\cg(u_h; T).
\end{eqnarray}
The advantage of such approximation is apparent that no geometric consideration is involved even for a small layer of elements near the ``interface buffer" region.

\def\M{\mathcal{M}}



We denote $\tilde{\eta}^2_{\rm mo}(u_h):=(C^{\rm tr})^2 \sum_{T\in\T_h} \tilde{\eta}^2_{\rm mo}(u_h;T)$. Recall that the local coarsening residual is already defined by \eqref{eq:localetaC}. We thus are ready to define the {\it modified} elementwise local estimator $\rho_{T}(u_h;T)$ for $T \in \T_h$ as
\begin{equation}
\rho_{T}(u_h;T) := (C^{\rm tr})^2 \frac{\tilde{\eta}_{\rm mo}^2(u_h; T)}{\tilde{\eta}_{\mo}(u_h)}+(\sqrt{3}C^{\rm tr}C_{\T_h})^2\frac{\eta^2_{\cg}(u_h;T)}{\eta_\cg(u_h)}.
\label{eq:localestimator1}
\end{equation} 
It is straightforward to see that $\sum_{T \in \T_h} \rho_T(u_h;T) = \tilde{\eta}_{\rm mo}(u_h) + \eta_{\rm cg}(u_h)$. We note that the constants $C^{\rm tr}$ and $C_{\T_h}$ are not known {\it a priori} and we use empirical estimates in the computation instead. 

%
As mentioned earlier, we do not assign the truncation residual to local contributions but compute it by \eqref{eqn:etatrc} directly and check its value during the adaptive process since it is much smaller than the total local estimator $\rho_{T}$ when $\Omega$ is sufficiently large (cf. \S~\ref{sec:local}). 

We give the main adaptive algorithm below. The algorithm essentially follows from \cite[Algorithm 3]{wang2018posteriori} but we adapt it to the current setting.

\begin{algorithm}[!ht]
\caption{A posteriori mesh refinement with control of the computational domain.}
\label{alg:size}
\begin{enumerate}
	\item[Step 0] \textit{Prescribe:} Set $\Omega$, $\T_h$, $N_{\max}$, $\rho_{\rm tol}$, $\tau_1$, $\tau_2$ and $R_{\max}$.
	
	\item[Step 1] \textit{Solve:} Solve the GRAC solution $u_{h}$ of \eqref{eq:min_ac} on the current mesh $\T_{h}$.  
	\item[Step 2] \textit{Estimate:} Carry out the stress tensor correction step in Algorithm \ref{alg:etc}, compute the error estimator $\rho_{T}$ by \eqref{eq:localestimator1} for each $T\in\T_h$, and the truncation error estimator $\eta_\trc$ by \eqref{eqn:etatrc}. Compute the degrees of freedom $N$ and $\rho = \sum_{T}\rho_T$. If $\eta_{\rm tr} > \tau_2 \rho$, enlarge the computational domain $\Omega$. Stop if $N>N_{\max}$ or $\rho < \rho_{\rm tol}$ or $R>R_{\max}$.
	
	\item[Step 3] \textit{Mark:} 
	\begin{enumerate}
	\item[Step 3.1]:  Choose a minimal subset $\M\subset \T_h$ such that
	\begin{displaymath}
		\sum_{T\in\M}\rho_{T}\geq\textrm{mean}(\rho).
	\end{displaymath}	 
	\item[Step 3.2]: We can find the interface elements which are within $k$ layers of atomistic distance, $\M^k_\i:=\{T\in\M\bigcap \T_h^\c: {\rm{dist}}(T, \Li)\leq k \}$. Choose $K\geq 1$, find the first $k\leq K$ such that 
	\begin{equation}
		\sum_{T\in \M^k_\i}\rho_{T}\geq \tau_1\sum_{T\in\M}\rho_{T},
		\label{eq:interface2}
	\end{equation}
	with tolerance $0<\tau_1<1$. If such a $k$ can be found, let $\M = \M\setminus \M^{k}_\i$. Then in step 3, expand the interface $\L^\i$ and the ``interface buffer" $\L^{\rm buf}$ outward by $k$ layers. 
	\end{enumerate}
	
	\item[Step 4] \textit{Refine:} Bisect all elements $T\in \M$. Go to Step 1.
\end{enumerate}
\end{algorithm}

\section{Numerical Experiments}
\label{sec:numerics}

\newcommand{\fig}[1]{Figure \ref{#1}}
\newcommand{\tab}[1]{Table \ref{#1}}

In this section, we conduct the adaptive computations for the three types of defects, namely micro crack, anti-plane screw dislocation and multiple vacancies,  based on our adaptive algorithm proposed in Algorithm \ref{alg:size} with different {\it a posteriori} error estimators and mesh structures. The geometries of these defects are visualized in Figure \ref{fig:geom}.


We consider a two dimensional triangular lattice $\Lhom := \mathsf{A}\Z^2$ with 
\begin{eqnarray}
\mA=\Bigg(\begin{array}{cc}
    1 & 1/2 \\
    0 & \sqrt{3}/2
\end{array} \Bigg).
\end{eqnarray}
Note that upon rotating and possibly dilating, the projection of the BCC crystal is in fact a triangular lattice \cite{2013-defects} and hence it is of practical interest to be considered. 

We adopt the well-known EAM potential as the site energy potential $V_{\ell}$ through our numerical experiments, where
\begin{align}
  V_{\ell}(y) := & \sum_{\ell' \in \Nhd_{\ell}} \phi(|y(\ell)-y(\ell')|) + F\B(
  {\textstyle \sum_{\ell' \in \Nhd_{\ell}} \psi(|y(\ell)-y(\ell')|)} \B)\nonumber\\
    = &\sum_{\rho \in \Rg_{\ell}} \phi\b(|D_\rho y(\ell)|\b) + F\B(
  {\textstyle \sum_{\rho \in \Rg_{\ell}}} \psi\b( |D_\rho y(\ell)|\b) \B),  
    \label{eq:eam_potential}
\end{align}
for a pair potential $\phi$, an electron density function $\psi$ and
an embedding function $F$. In particular, we choose
\begin{displaymath}
\phi(r)=\exp(-2a(r-1))-2\exp(-a(r-1)),\quad \psi(r)=\exp(-br),
\end{displaymath}
\begin{displaymath}
\text{ and } \quad F(\tilde{\rho})=C\left[(\tilde{\rho}-\tilde{\rho_{0}})^{2}+
(\tilde{\rho}-\tilde{\rho_{0}})^{4}\right],
\end{displaymath}
with parameters $a=4, b=3, c=10$ and $\tilde{\rho_{0}}=6\exp(0.9b)$, which is exactly the same as the numerical experiments in \cite{COLZ2013} and \cite{wang2018posteriori}.


We test the performance of the {\it modified a posteriori} error estimators and the mesh structures with different parameters, i.e. the width of the buffer region and the width of the blending region, which we believe may influence the adaptive process. In particular, we carry out the adaptive computations for the {\it modified} elementwise local estimator $\rho_{T}(u_h;T)$ defined in \eqref{eq:localestimator1} first by using the {\it directly approximated} modeling residual $\tilde{\eta}_\mo(u_h; T)$ defined in \eqref{eq:direct_approx} with the widths of the buffer region being $R_{\rm buf}=1, 3$ and $10$. We then fix $R_{\rm buf}=3$ and apply the {\it blended approximated} modeling residual defined in \eqref{eq:approx} with the widths of the blending region $R_{\rm bld}=2$ and $6$. To better reveal the effectiveness and the efficiency of our error estimators and mesh structures, we compare the results of our adaptive computations with those using an {\it a priori} graded mesh \cite{2013-defects, PRE-ac.2dcorners}, the {\it original} residual based {\it a posteriori} error estimator given in \eqref{eq:localetam} and \eqref{eq:localetaC} and also the purely coarsening error estimator by totally neglecting the modeling residual.




In all numerical experiments, the radius of the computational domain $\Omega$ is set to be $300$ initially. The adaptive processes start with the initial configuration with $R_{\rm a}=6$, where for the case of multiple vacancies, it means that the initial radius of each disjoint atomistic region is 6. The adaptive parameters in Algorithm \ref{alg:size} are fixed to be $\tau_1=0.7$ and $\tau_2=1.0$. 


\subsection{Micro-crack}
\label{sec:numerics:m-crack}
The first defect we consider is the micro-crack, which is a prototypical example of point defects. We note that, technically speaking, this is not  a ``crack". It only serves as an example of a localized defect with an anisotropic shape. To generate this defect, we remove $k$ atoms from $\Lhom$,
\begin{align*}
\L_{k}^{\rm def}:=\{-(k/2)e_{1}, \ldots, (k/2-1)e_{1})\},     & \qquad{\rm if }\quad k \quad\textrm{is even},\\
\L_{k}^{\rm def}:=\{-(k-1)/2e_{1}, \ldots, (k-1)/2e_{1})\}, & \qquad{\rm if }\quad k \quad\textrm{is odd},
\end{align*}
and $\L = \Lhom\setminus \L_{k}^{\rm def}$. 
In our example, we set $k=11$. We apply an isotropic stretch $\mathrm{S}$ and a shear $\gamma_{II}$  by setting
\begin{displaymath}
{\sf B}=\left(
	\begin{array}{cc}
		1 & \gamma_{II} \\
		0            & 1+\mathrm{S}
	\end{array}	 \right)
	\cdot{\sf {F_{0}}},
\end{displaymath}
where ${\sf F_{0}} \propto \mathrm{I}$ is a macroscopic stretch or compression and $\mathrm{S}=\gamma_{II}=0.03$.

Figure \ref{fig:conv_mcrack_conv} shows the convergence of the true error $\|\nabla u_h - \nabla u\|_{L^2(\Omega)}$ with respect to the number of degrees of freedom $N$ and Figure \ref{fig:conv_mcrack_efffac} shows the efficiency factors (which is defined by the ratios of the error estimators and the true error) for various a posteriori error estimators and mesh structures. 

\begin{figure}[htb]
\centering
	\subfloat[Convergence]{
		\label{fig:conv_mcrack_conv}
		\includegraphics[height=5.5cm]{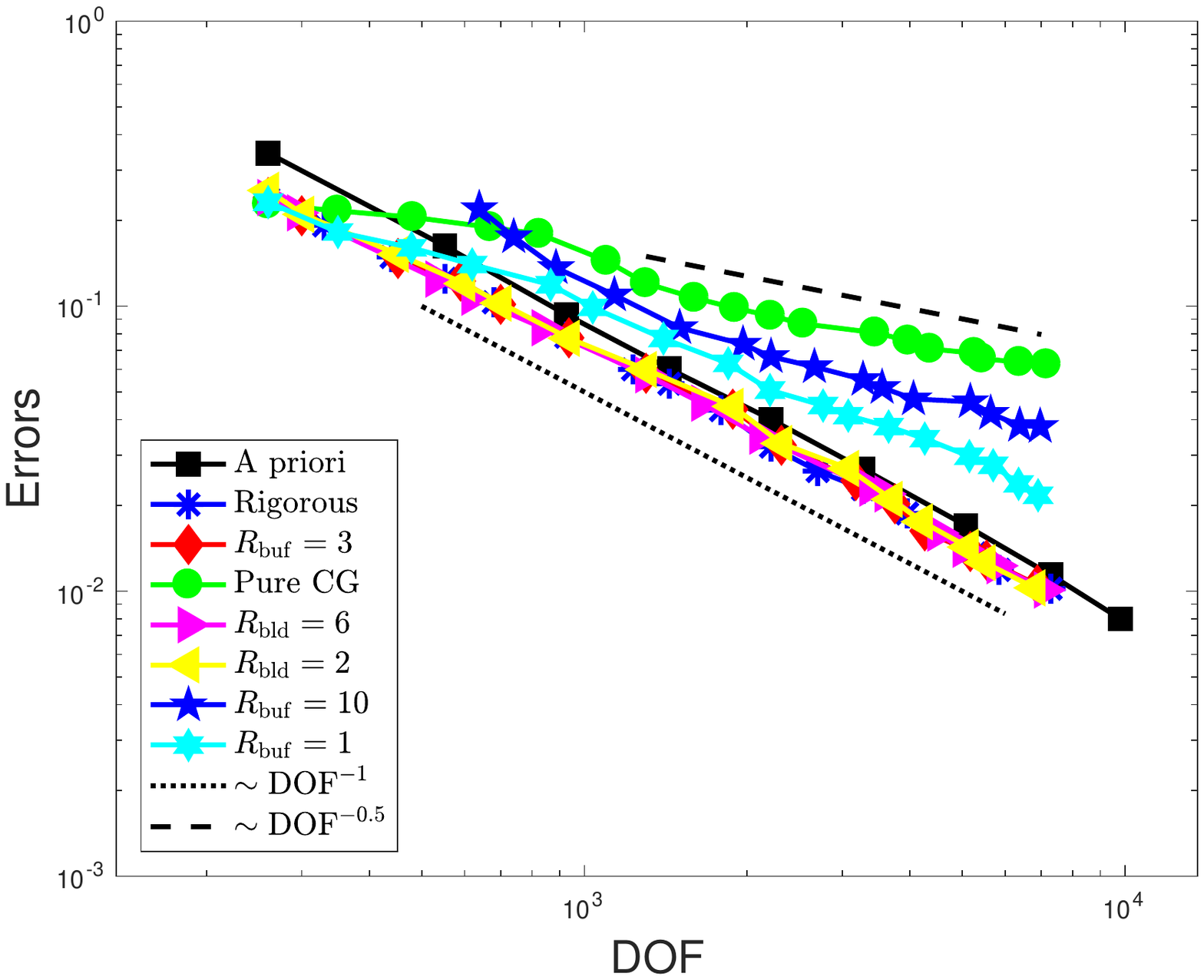}}
	\hskip0.2cm
	\subfloat[Efficiency factors]{
		\label{fig:conv_mcrack_efffac}
		\includegraphics[height=5.5cm]{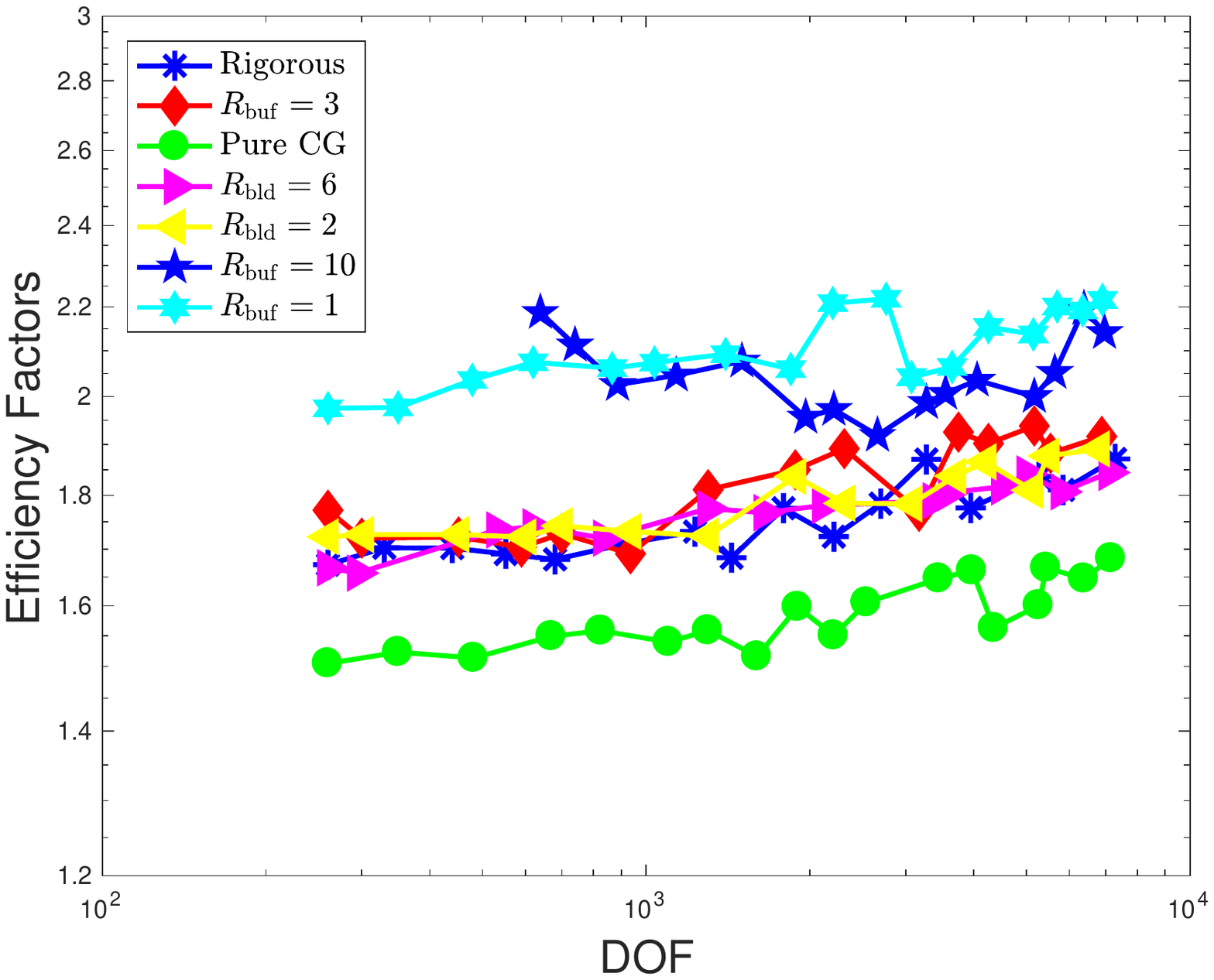}}
	\caption{The convergences of the error and the efficiency factors for different error estimators with respect to the number of degrees of freedom for the micro-crack.}
	\label{figs:conv_mcrack}
\end{figure} 

First of all, Figure \ref{fig:conv_mcrack_conv} clearly verifies the necessity of the prudence of treating the modeling residual estimator around the interface. In terms of the error versus the number of degrees of freedom, all adaptive computations achieve the same optimal convergence rate compared with the {\it a prior} graded mesh and the {\it original} residual based {\it a posteriori} error estimator and are barely distinguishable if we fix the width of the buffer region to be 3, i.e., $R_{\rm buf}=3$. Totally removing the modeling residual  from the {\it a posteriori} error estimator (represented by the green line with round markers) or crudely approximating it near the interface (represented by the light blue line with six-pointed star markers for $R_{\rm buf}=1$) apparently result in a lower convergence rate. However, we may not need to be too cautious either, since the influence of the modeling error decays rapidly \cite{dobson2009analysis, chen2012ghost, cui2013}. This is demonstrated by the dark blue line with star markers for $R_{\rm buf}=10$, where apparently
unnecessary degrees of freedom are placed around the interface that causes a even worse rate of convergence even compared with the crude approximation.
Figure \ref{fig:conv_mcrack_efffac} plots the efficiency factors for various error estimators and mesh structures. The factors are uniformly moderate for all the estimators with some oscillation which we believe has little effect for the adaptivity. We note that rigorous efficiency analysis, which proves that the residual based error estimator provides both upper and lower bounds for the true error up to some constants, requires a substantial amount of additional technicality and should be included in a separate work. We refer to \cite{wang2018analysis} for a theoretical consideration of the efficiency for the GRAC method in one dimension that may be the only existing literature in this field to the best knowledge of the authors. 


\begin{figure}[htp]
\begin{center}
	\subfloat[CPU times]{\label{fig:time_mcrack}\includegraphics[height=5.5cm]{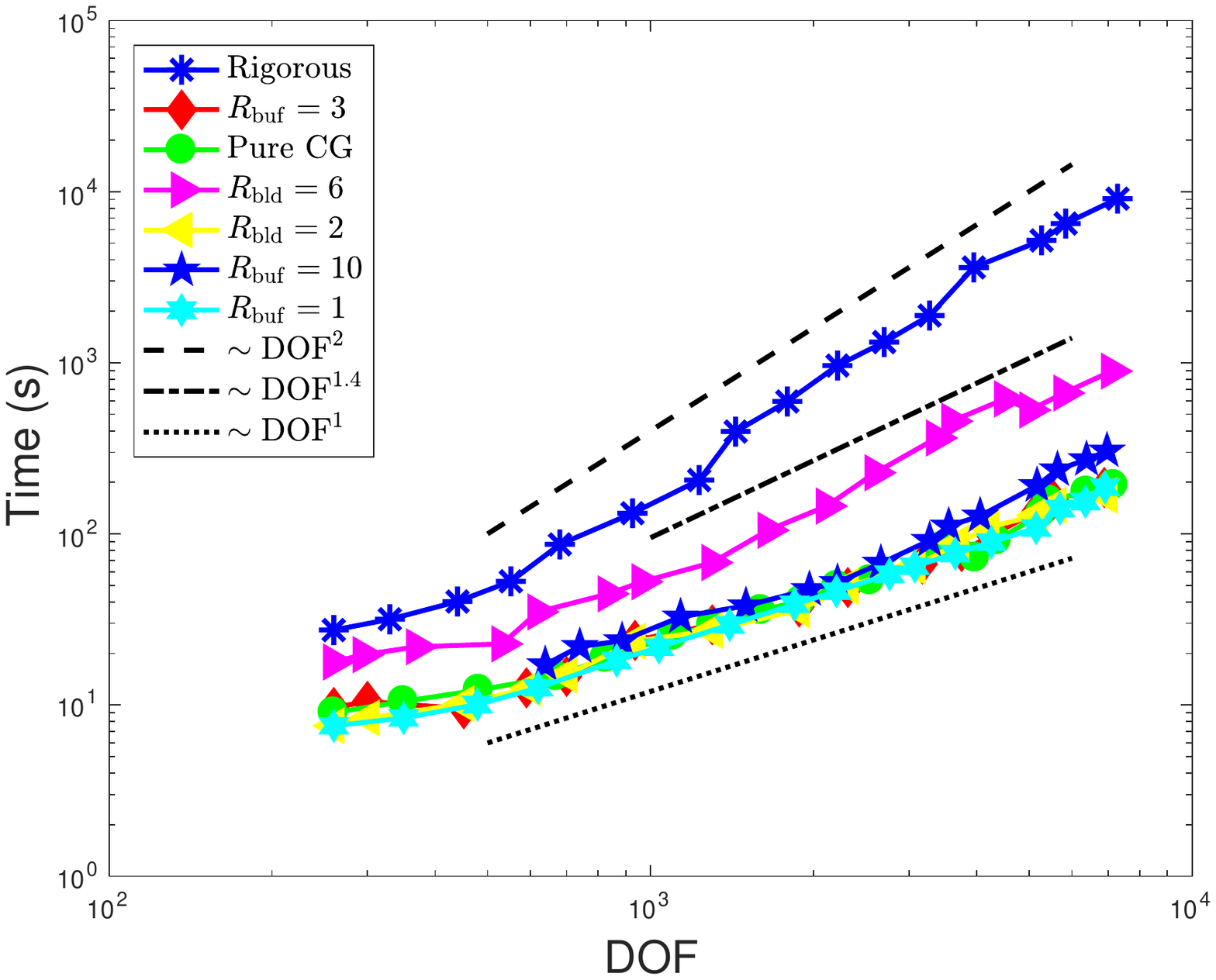}}
	\hskip0.2cm
	\subfloat[Time constitution]{\label{fig:time_mcrack_sp}\includegraphics[height=5.5cm]{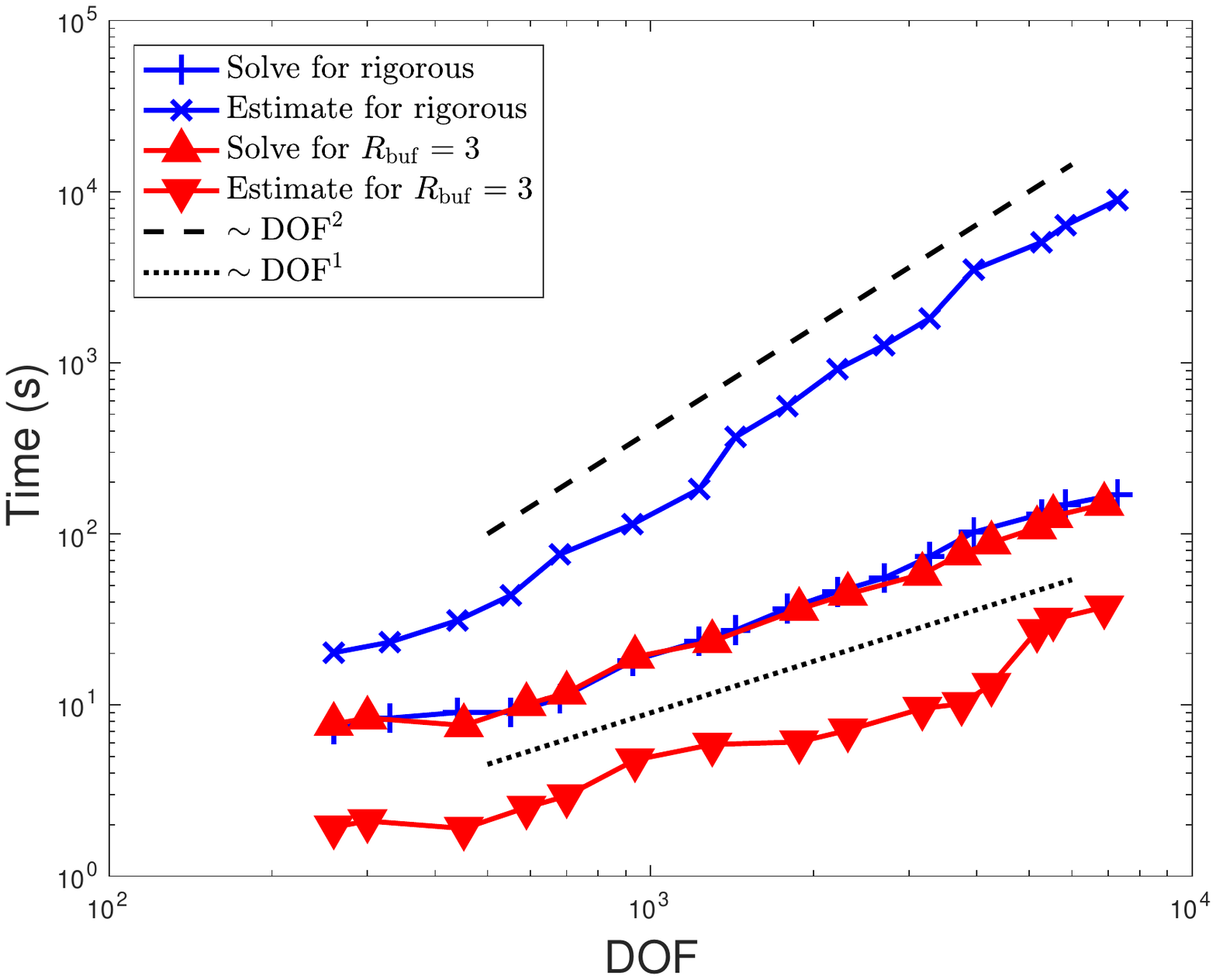}}
	\caption{The CPU times for each step in the adaptive computations for the micro-crack. }
	\label{fig:time_mcrack_all}
\end{center}
\end{figure}


The most interesting observation that illustrates the significance of the current work can be concluded in Figure \ref{fig:time_mcrack_all}.  Figure \ref{fig:time_mcrack} plots the total CPU times versus the numbers of degrees of freedom $N$ for various error estimators and mesh structures while Figure \ref{fig:time_mcrack_sp} plots those for computing the error estimators and solving the a/c coupling problems separately. We see clearly in Figure \ref{fig:time_mcrack} that the CPU time of adaptively solving the a/c problem using the {\it original a posteriori} error estimator scales as $O(N^2)$ whereas those using all other estimators scales as $O(N)$, except for the case of $R_{\bld} = 6$ (a slightly large blending region). This coincides with our estimate of the computational cost in \S~\ref{sec:local} and evidently shows that the {\it original a posteriori} error estimator (represented by the blue line with asterisk markers) may not be employed for the purpose of practical adaptive computations. It is also interesting to see that although we observe in Figure \ref{fig:conv_mcrack_conv} an optimal convergence rate of the error using the blended approximated estimator, Figure \ref{fig:time_mcrack} shows that such construction may gild the lily, even when the blending region is just slightly large (represented by the purple line with triangle markers for $R_{\rm bld}=6$). It may be explained by the fact that all the computations for the modeling residual, including computing the atomistic stress tensor and finding the geometric relation between  $\Ta$ and $\T_h$, are still carried out in the blending region which increase the computational cost rapidly with the growing width of the blending region. The dark blue line with star markers, which represents the mesh structure having a large buffer region ($R_{\rm buf}=10$), is noticed to be slightly higher than other ``$O(N)$" curves. This shows that computing the modeling residual is still more expensive than the coarsening residual for the reason that the modeling residual is ultimately nonlocal. This indicates that we may avoid computing the atomistic stress tensor as much as possible if the interface effect is properly captured for the purpose of efficiency. 

A more straightforward comparison of the {\it original} error estimator and the {\it modified} one is given in Figure \ref{fig:time_mcrack_sp}. It shows that the additional work purely comes from the computing the {\it original} estimator but the cost of computing the {\it modified} estimator is somehow marginal compared with that of solving the a/c problems themselves.

\subsection{Anti-plane Screw Dislocation}
\label{sec:numerics:screw dislocation}

The second defect for which we conduct the adaptive computation using Algorithm \ref{alg:size} is the anti-plane screw dislocation. Following \cite{2013-defects}, we restrict ourselves to an anti-plane shear motion which is illustrated in Figure \ref{fig:geom_screw}. In this case, the Burgers vector is set to be $b=(0,0,1)^T$ and the center of the dislocation core is chosen to be $\hat{x}:=\frac{1}{2}(1,1,\sqrt{3})^T$, and we assume that no additional shear deformation is applied. Thus the unknown for this dislocation model is the displacement in the $e_3$-direction. As we mentioned at the beginning of \S~\ref{sec:formulation:atm}, we choose $u_0 \in \Us$ given by the solution of a linear elasticity model as a far-field crystalline environment or a {\it predictor} in the language of numerical computation and we need to compute $u$ which can be considered as a {\it corrector}. For the sake of completeness, we review the detailed derivation of $u_0$ in \ref{sec:appendix}.

Similar results are observed for the anti-plane screw dislocation as those in the case of micro-crack with half of the convergence rate as predicted in \cite{2013-defects}. It is interesting to see that the adaptive computation using the pure coarsening residual (represented by the green line with round markers) again results in half of the optimal convergence rate (this time being DOF to the power of $0.25$ rather than to the power of $0.5$). Whether it is a coincidence needs a further investigation.

%
%
%

\begin{figure}[htb]
\centering
	\subfloat[Convergence]{
		\label{fig:conv_screw_conv}
		\includegraphics[height=6cm]{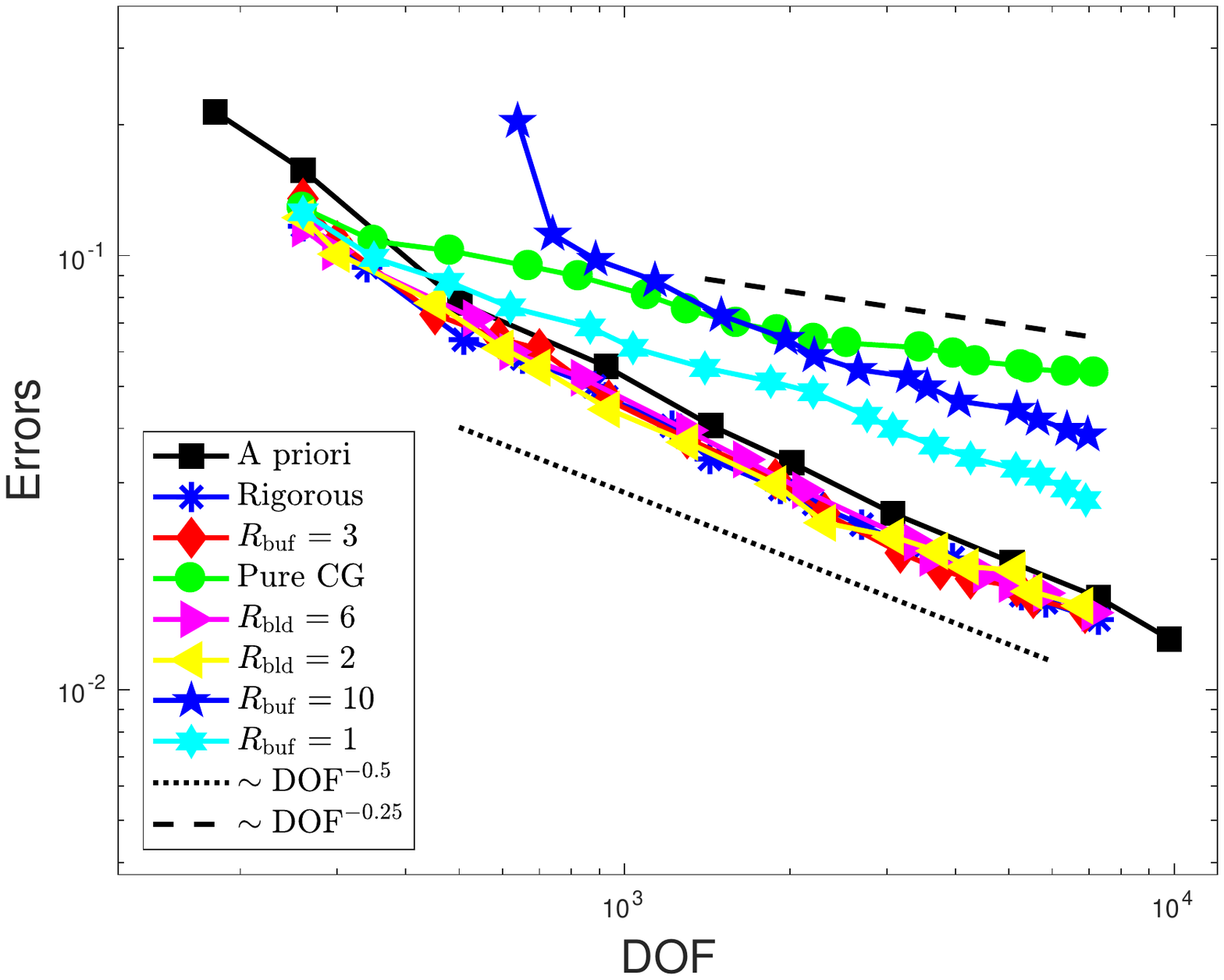}}
	\hskip0.2cm
	\subfloat[Efficiency factors]{
		\label{fig:conv_screw_efffac}
		\includegraphics[height=6cm]{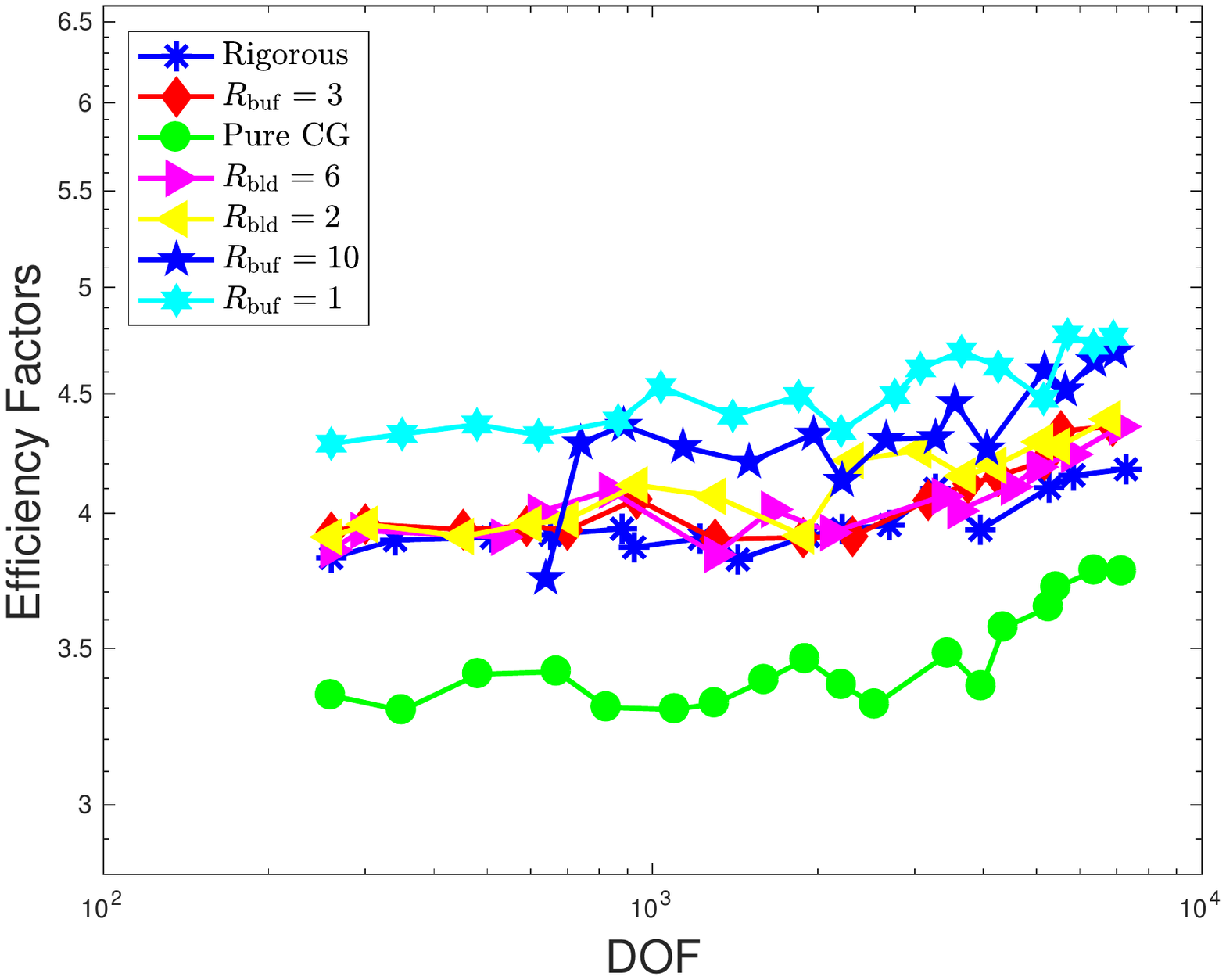}}
	\caption{The convergences of the error and the efficiency factors for different error estimators with respect to the number of degrees of freedom for the anti-plane screw dislocation.}
	\label{figs:conv_screw}
\end{figure} 

\begin{figure}[htp]
\begin{center}
	\subfloat[CPU times]{\label{fig:time_screw}\includegraphics[height=5.5cm]{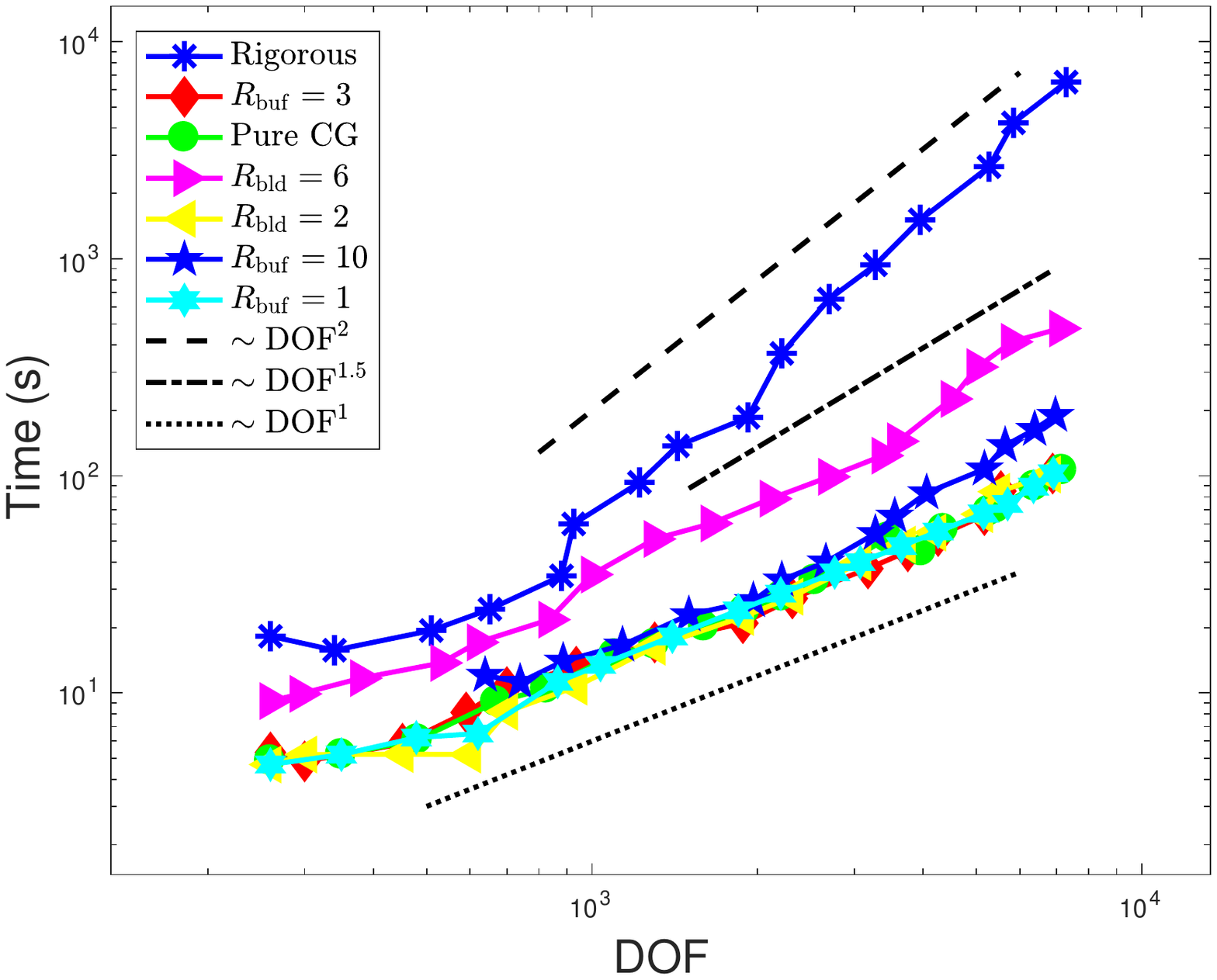}}
	\hskip0.3cm
	\subfloat[Time constitution]{\label{fig:time_screw_sp}\includegraphics[height=5.5cm]{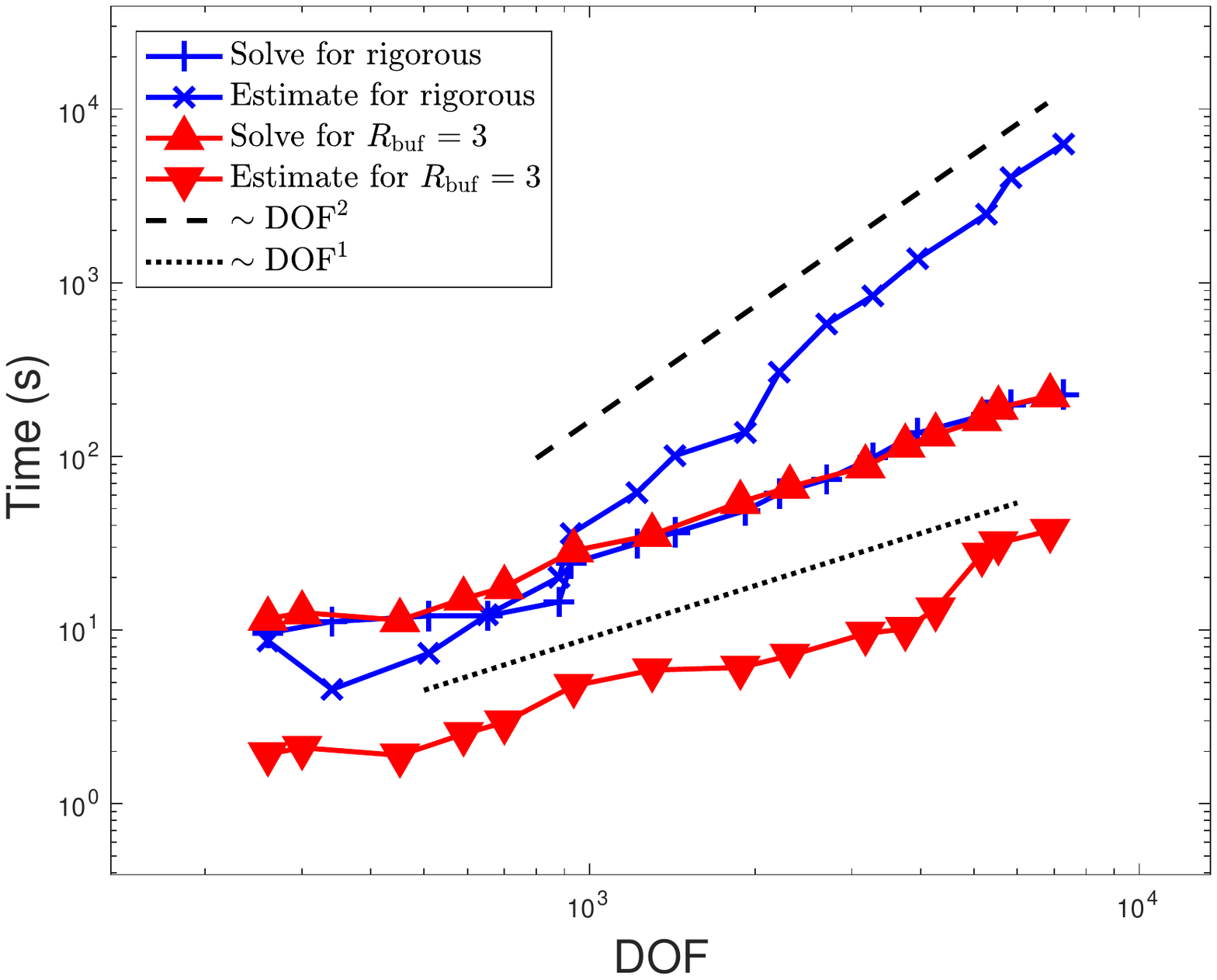}}
	\caption{The CPU times for each step in the adaptive computations for the anti-plane screw dislocation. }
	\label{fig:time_screw_all}
\end{center}
\end{figure}

\subsection{Multiple Vacancies }
\label{sec:numerics:multiple vacancies}
The last type of defect we consider is the case of multiple vacancies. We select three vacancy sites which are separated away from each other by a distance of $40$ (see Figure \ref{fig:geom_multivac} for an illustration). We then apply the same isotropic stretch $\mathrm{S}$ and shear $\gamma_{II}$ to this defected system as that in the micro-crack. This is a very simple model for the interactions of defects and we choose this case to see if the adaptive a/c coupling method can be potentially applied to study such problems. 

We show in Figure \ref{figs:evolution} the evolution of the atomistic and continuum partitioning in the adaptive process. Unlike the micro-crack and the anti-plane screw dislocation where only one atomistic region exists throughout, we start with three disjoint atomistic regions initially which contain the three separated vacancies. The atomistic regions then grow and get very close as the adaptivity goes and finally merge into one ``big" atomistic region. We see in the fourth subplot Figure \ref{figs:evolution} that the sizes of the mesh between the atomistic regions also shrink so small that we should adaptively shift the continuum model to the atomistic model. 

%
%

\begin{figure}[htb]
\begin{center}
	\includegraphics[scale=0.6]{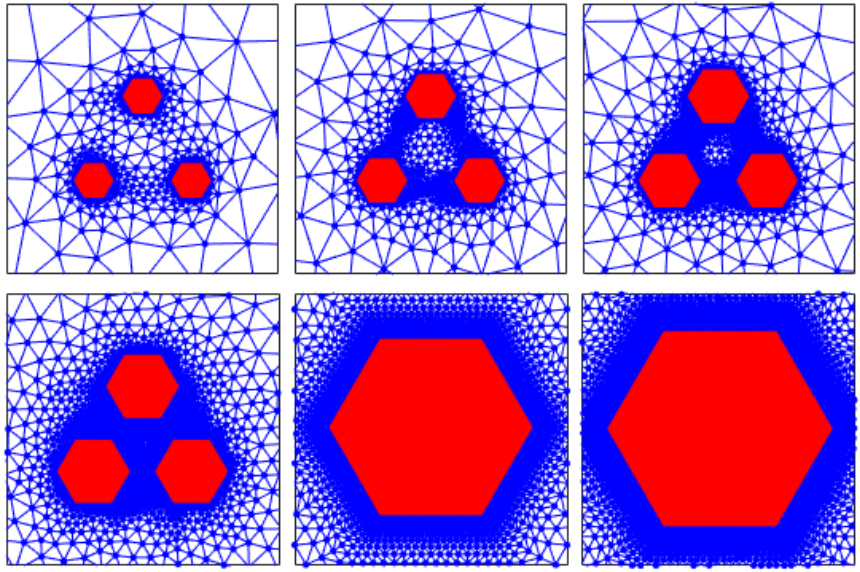}
	\caption{Illustration of the evolution of atomistic and continuum partition in the adaptive process for the multiple vacancies}
	\label{figs:evolution}
\end{center}
\end{figure}

We observe in Figure \ref{fig:conv_multivac_conv} a sudden drop of the error in each line which corresponds to the merge of the disjoint atomistic regions though the lines all possess their own ``correct" convergence rate. We put the term ``correct" in a quotation mark because there is actually no predicted convergence rate for this case before the merge of the atomistic regions. This is also the reason why there is no line representing the graded mesh, whose construction is often based on such prediction, is plotted in Figure \ref{fig:conv_multivac_conv} as opposed in Figure \ref{fig:conv_mcrack_conv} and Figure \ref{fig:conv_screw_conv}. However, this may be a more common situation we need to face when dealing with realistic material systems. It is interesting to discuss the issue of how we might adjust the adaptive strategy so that the time spot of the merge of the atomistic is (quasi-)optimal though it is beyond the scope of the current work and may be included in a future numerical study of the adaptive a/c method.

\begin{figure}[htb]
\centering
	\subfloat[Convergence]{
		\label{fig:conv_multivac_conv}
		\includegraphics[height=5.5cm]{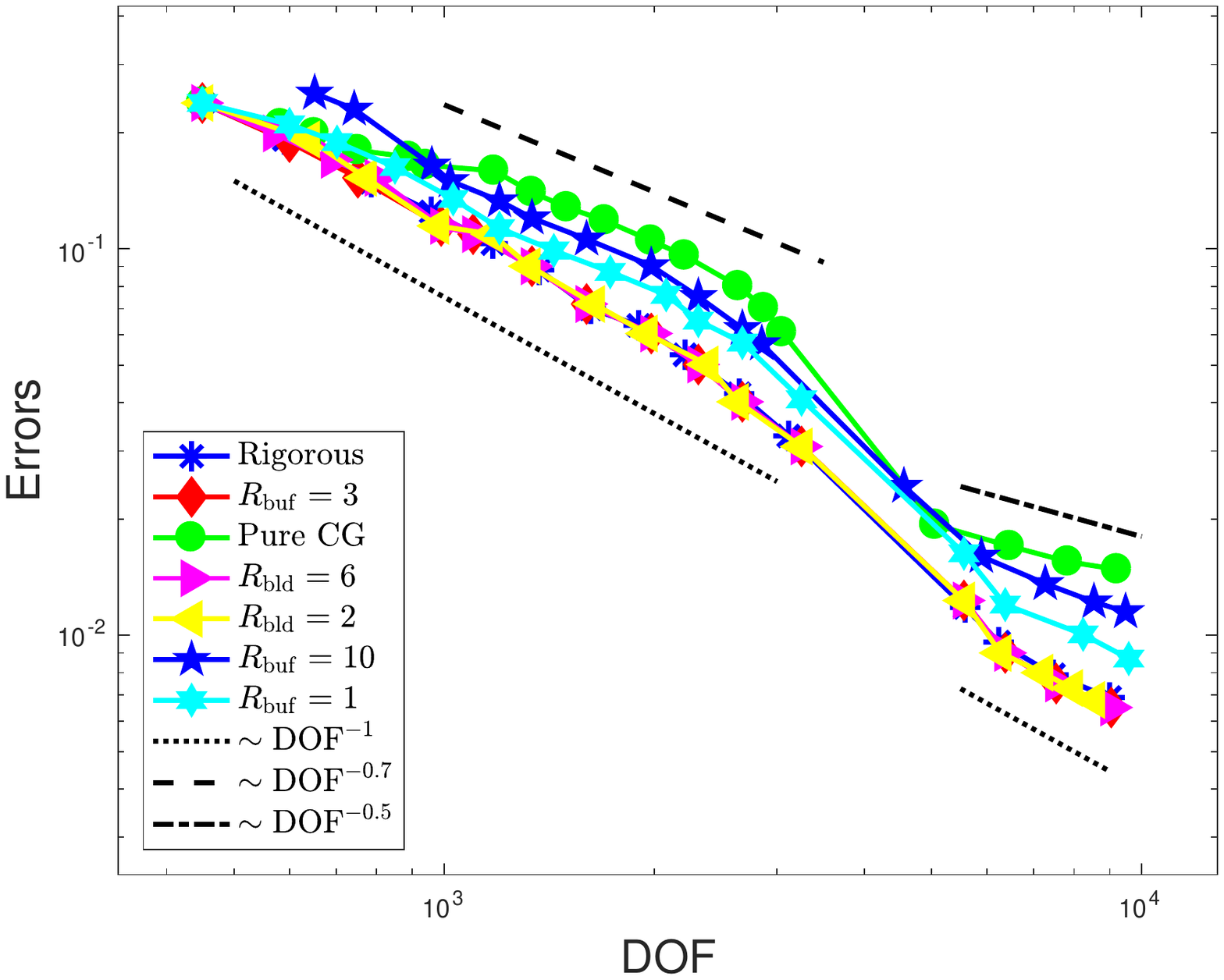}}
	\hskip0.2cm
	\subfloat[Efficiency factors]{
		\label{fig:conv_multivac_efffac}
		\includegraphics[height=5.5cm]{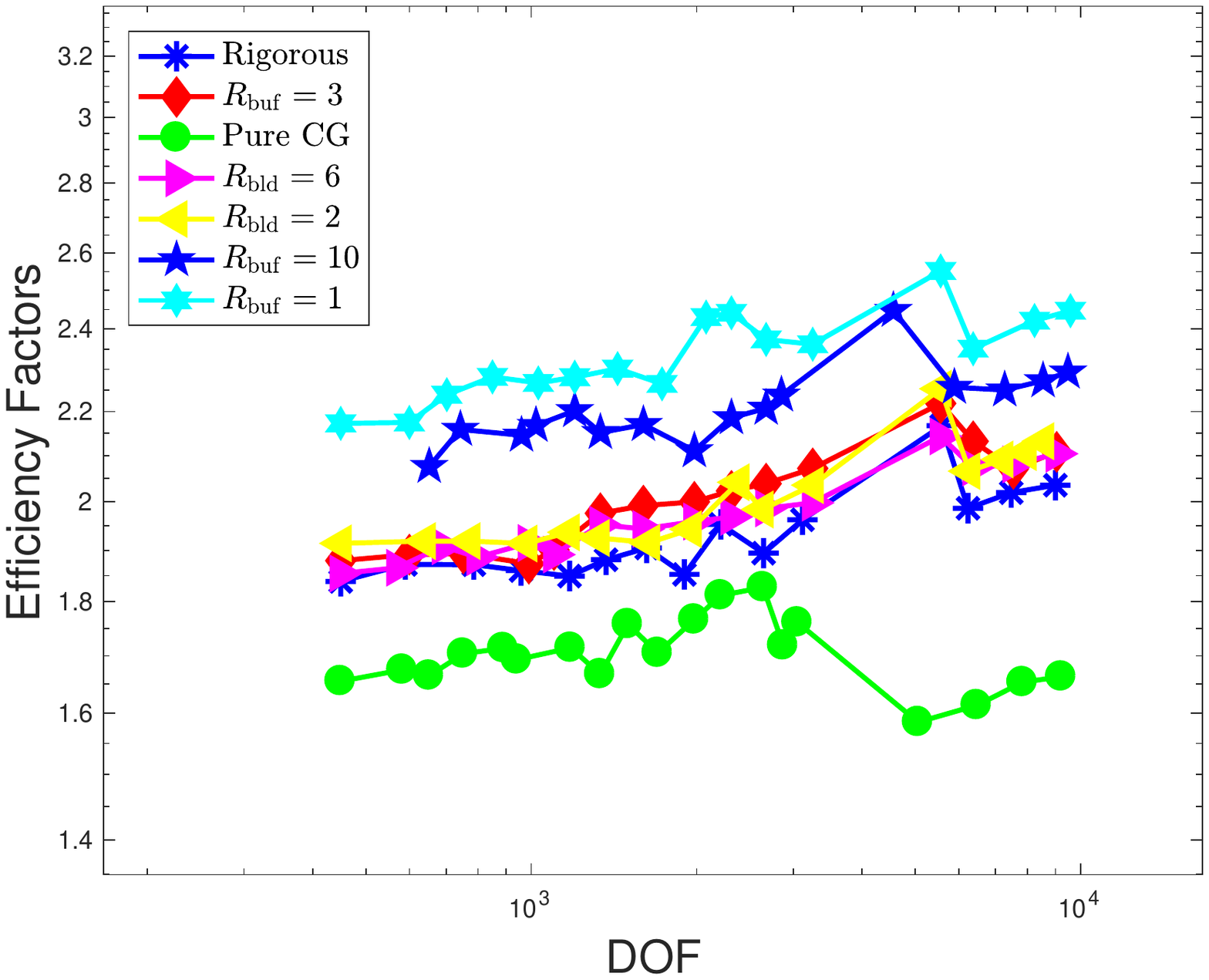}}
	\caption{The convergences of the error and the efficiency factors for different error estimators with respect to the number of degrees of freedom for the multiple vacancies.}
	\label{figs:conv_multivac}
\end{figure}

\begin{figure}[htp]
\begin{center}
	\subfloat[CPU times]{\label{fig:time_multivac}\includegraphics[height=5.5cm]{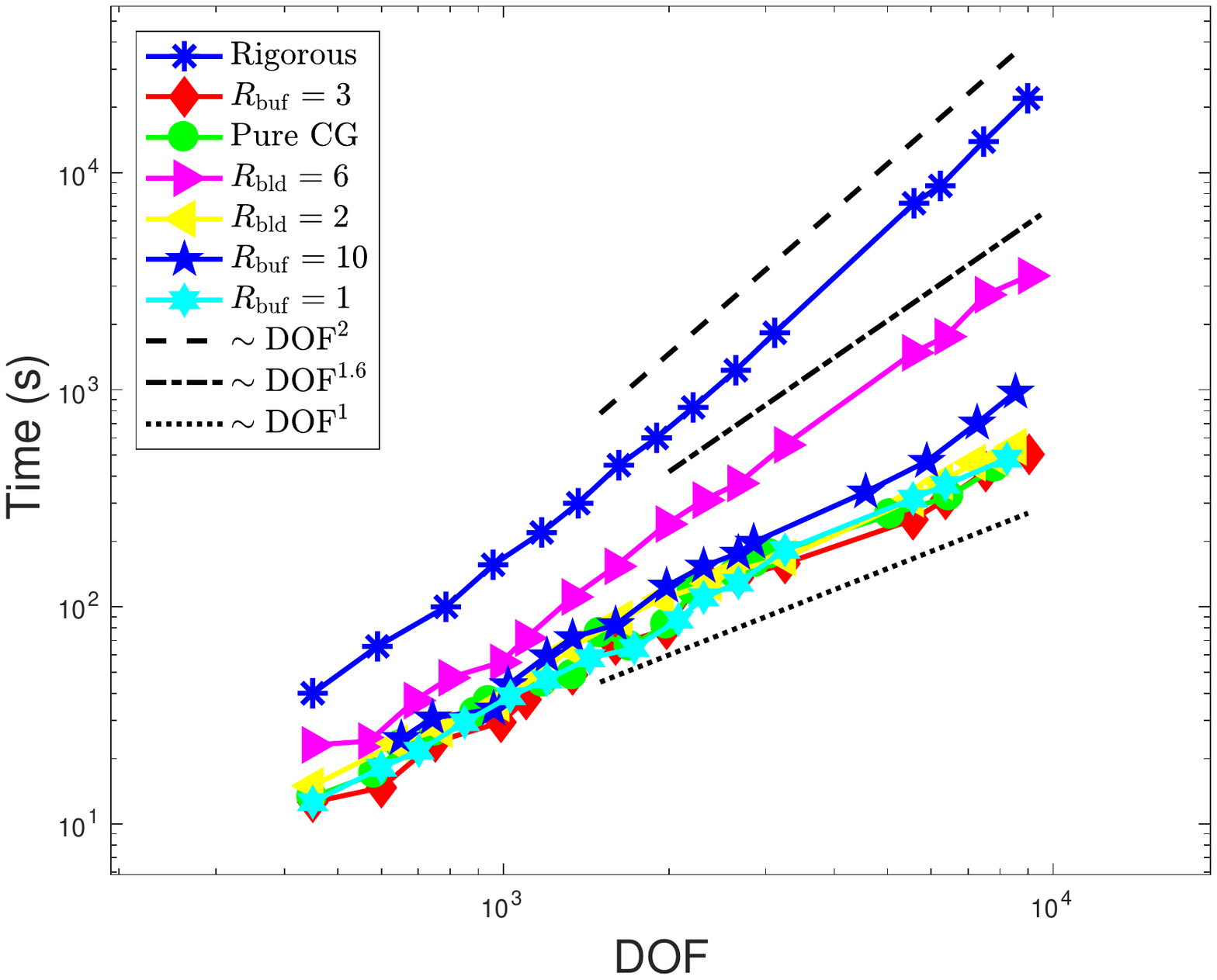}}
	\hskip0.3cm
	\subfloat[Time constitution]{\label{fig:time_multivac_sp}\includegraphics[height=5.5cm]{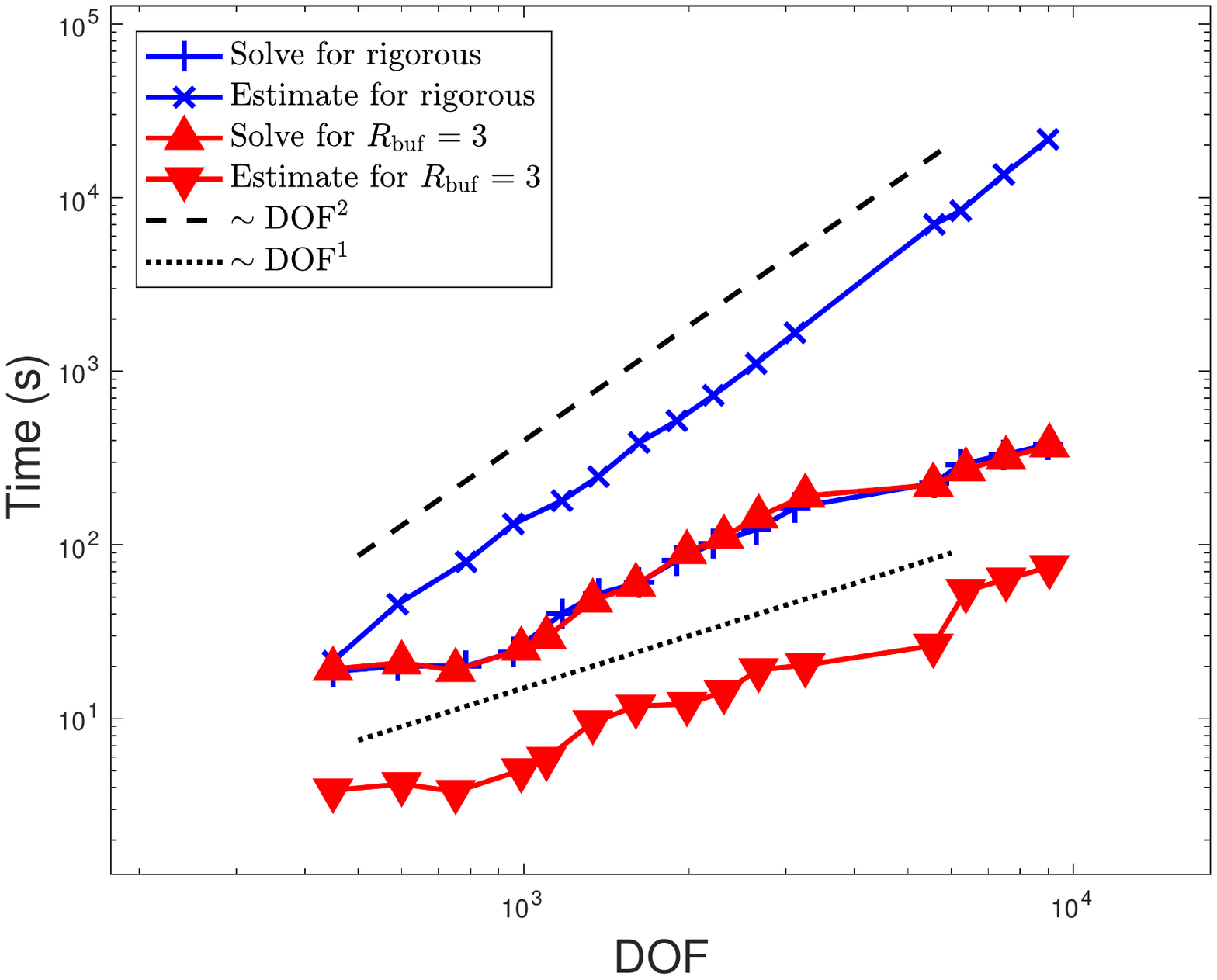}}
	\caption{The CPU times for each step in the adaptive computations for the multiple vacancies. }
	\label{fig:time_multivac_all}
\end{center}
\end{figure}

The final figure in this paper once again shows the advantage of using the properly {\it modified a posteriori} error estimator. We observe a slightly worse rate for the blended estimator (the purple line with triangle markers with the rate of $1.6$). This might be caused by the complex geometry between the atomistic regions, which confirms our belief that any unnecessary search for the geometric relationship between $\Ta$ and $\T_h$ should be avoided.

\section{Conclusion and Outlook}
\label{sec:conclusion}


We develop the efficient adaptive algorithm for the geometric reconstruction based consistent atomistic/continuum (GRAC) coupling scheme based on a {\it modified} {\it residual based} {\it a posteriori} error estimator for various types of crystalline defects. The key for achieving such efficiency but maintaining comparable accuracy is the strategy of constructing an a/c coupling mesh with a small ``interface buffer" region around the coupling interface in which the elementwise modeling residual is computed exactly but outside which the modeling residual is replaced by a theoretically and numerically justified approximation using its coarsening counterpart. Although we believe such strategy is generally applicable for a/c coupling schemes and more complex crystalline defects, the current research still raises a few open problems which deserve further mathematical analysis and algorithmic developments.

%

The first one is the extension to general short range interactions \cite{liao2018posteriori}.  To generalize our strategy to that setting, a detailed study of the width of ``interface buffer" region with respect to the interaction range is needed. 


The second one is the problem of adaptive error control for energy blended a/c coupling schemes, such as energy-based blended quasi-continuum (BQCE) method and blended ghost force correction (BGFC) method. The {\it a priori} analysis in \cite{LiOrShVK:2014, OrZh:2016, MiLu:2011} provide an analytical framework and the stress tensor based formulation plays a key role in the analysis. By inheriting such framework and using the {\it modified} error estimator proposed in this paper, the adaptivity for those coupling schemes should be somehow straightforward for the case of fixed width of the blending region. However, a considerable amount of effort may be required to problem of the adaptive choice of the width of the blending region.


The third one is the extension to three dimensional problems. We note that constructing a consistent a/c coupling scheme with a sharp interface in three dimensions still remains an open problem. Thus the development of adaptive energy blended methods mentioned in the previous paragraph may be the appropriate direction.


The last but may be the most important problem to consider is the adaptive computation for complex crystalline defects. The simulation of the interactions of defects, which is briefly touched by the third numerical experiment, and the dislocation nucleation have already attracted considerable attention. It is very difficult (if not impossible) to give rigorous {\it a priori} analysis for such problems, including the proper boundary conditions and complicated geometry at the interface. However, it should be the place where the adaptive a/c coupling methods reveal its most advantage and power and we believe that the idea of the current research makes it possible to develop efficient and robust adaptive algorithms for those problems with practical interests.


\appendix
\renewcommand\thesection{\appendixname~\Alph{section}}

\section{Far-field boundary predictor for anti-plane screw dislocations}
\label{sec:appendix}

\newcommand{\ulin}{u^{\rm lin}}
\newcommand{\burg}{{\sf b}}

We model dislocations by following the setting used in \cite{2013-defects}. We consider a model for straight dislocations obtained by projecting a 3D crystal into 2D. Let $B \in \R^{3\times 3}$ be a nonsingular matrix. Given a Bravais lattice $B\Z^3$ with dislocation direction parallel to $e_3$ and Burgers vector $\burg=(\burg_1,0,\burg_3)$, we consider
displacements $W: B\Z^3 \rightarrow \R^3$ that are periodic in the direction of the dislocation direction of $e_3$. Thus, we choose a projected reference lattice $\L := A\Z^2 := \{(\ell_1, \ell_2) ~|~ \ell=(\ell_1, \ell_2, \ell_3) \in B\Z^3\}$. We also introduce the projection operator 
\begin{equation}\label{eq:P}
    P(\ell_1, \ell_2) = (\ell_1, \ell_2, \ell_3) \quad \text{for}~\ell \in B\Z^3.
\end{equation}
It can be readily checked that this projection is again a Bravais lattice. For anti-plane screw dislocation, $\L$ is obtained as projection of a 3D Bravais lattice along the screw dislocation direction (and the direction of slip) and we restrict the displacements of the form $u = (0, 0, u_3)$.

We follow the constructions in \cite{2013-defects, 2017-bcscrew} for modeling dislocations and prescribe $u_0$ as follows. Let $\L\subset\R^2$, $\hat{x}\in\R^2$ be the position of the dislocation core and $\Upsilon := \{x \in \R^2~|~x_2=\hat{x}_2,~x_1\geq\hat{x}_1\}$ be the ``branch cut'', with $\hat{x}$ chosen such that $\Upsilon\cap\Lambda=\emptyset$.

We define the far-field predictor $u_0$ by solving the continuum linear elasticity (CLE)
\begin{eqnarray}\label{CLE}
\nonumber
\mathbb{C}^{j\beta}_{i\alpha}\frac{\partial^2 u^{\rm lin}_i}{\partial x_{\alpha}\partial x_{\beta}} &=& 0 \qquad \text{in} ~~ \R^2\setminus \Upsilon,
\\
u^{\rm lin}(x+) - u^{\rm lin}(x-) &=& -\burg \qquad \text{for} ~~  x\in \Upsilon \setminus \{\hat{x}\},
\\
\nonumber
\nabla_{e_2}u^{\rm lin}(x+) - \nabla_{e_2}u^{\rm lin}(x-) &=& 0 \qquad \text{for} ~~  x\in \Upsilon \setminus \{\hat{x}\},
\end{eqnarray}
where the forth-order tensor $\mathbb{C}$ is the linearised Cauchy-Born tensor (derived from the potential $V$, see \cite[\S~7]{2013-defects} for more detail).

We mention that for the anti-plane screw dislocation, under the proper assumptions on the interaction range $\Rg$ and the potential $V$, the first equation in \eqref{CLE} simply becomes to $\Delta u^{\rm lin} = 0$ \cite{2017-bcscrew}. The system \eqref{CLE} then has the well-known solution 
\begin{align}\label{predictor-u_0-dislocation}
u_0(x) := u^{\rm lin}(x) = \frac{\burg}{2\pi}\arg(x-\hat{x}),
\end{align}
where we identify $\R^2 \cong \C$ and use $\Upsilon-\hat{x}$ as the branch cut for arg.

Note that for the purpose of analysis, we have $\nabla u_0 \in C^{\infty}(\R^2\setminus\{0\})$ and $|\nabla^j u_0| \leq C|x|^{-j}$
for all $j \geq 0$ and $x \neq 0$.


\bibliography{qc}

\end{document}